\newcommand{\be}{\begin{equation}}
\newcommand{\ee}{\end{equation}}
\newcommand{\beq}{\begin{eqnarray}}
\newcommand{\eeq}{\end{eqnarray}}
\newtheorem{prop}{Proposition}[section]
\newtheorem{remark}[prop]{Remark}
\def\begeq{\begin{equation}}
\def\endeq{\end{equation}}
\def\odot{\setbox0=\hbox{$\bigcirc$}\relax \mathbin {\hbox
to0pt{\raise.5pt\hbox to\wd0{\hfil $\wedge$\hfil}\hss}\box0 }}
\numberwithin{equation} {section}
\numberwithin{equation}{section}
\newtheorem{theorem}{\bf Theorem}[section]
\newtheorem{lemma}[theorem]{\bf Lemma}
\newtheorem{corollary}[theorem]{\bf Corollary}
\begin{document}

\title[translating surfaces of the non-parametric mean curvature flow in $M^{2}\times\mathbb{R}$]
 {translating surfaces of the non-parametric mean curvature flow in Lorentz manifold $M^{2}\times\mathbb{R}$}

\author{
Li Chen, Dan-Dan Hu, Jing Mao$^{\ast}$ and Ni Xiang}
\address{
Faculty of Mathematics and Statistics, Key Laboratory of Applied
Mathematics of Hubei Province, Hubei University, Wuhan 430062,
China. } \email{ jiner120@163.com}

\thanks{
$\ast$ Corresponding author}

\date{}

\begin{abstract}
In this paper, for the Lorentz manifold $M^{2}\times\mathbb{R}$,
with $M^{2}$ a $2$-dimensional complete surface with nonnegative
Gaussian curvature, we investigate its space-like graphs over
compact strictly convex domains in $M^{2}$, which are evolving by
the non-parametric mean curvature flow with prescribed contact angle
boundary condition, and show that solutions converge to ones moving
only by translation.
\end{abstract}

\maketitle {\it \small{{\bf Keywords}: Translating surfaces, mean
curvature flow, Lorentz manifolds.}

{{\bf MSC}: Primary 53C44, Secondary 53C42, 35B45, 35K93.} }

\section{Introduction}

In Riemmnain (or pseudo-Riemannian) geometry, the mean curvature
flow (MCF for short) is actually evolving a family of immersed
submanifolds along their mean curvature vectors $\vec{H}$ with a
speed $|\vec{H}|$. More precisely, let $X_{0}:M^{n}\rightarrow
N^{m+n}$ be an isometric immersion from an $n$-dimensional oriented
Riemannian manifold $M$ to an $(n+m)$-dimensional Riemannian (or
pseudo-Riemannian) manifold $N^{n+m}$ (or with a pseudo-Riemannian
metric whose signature is $(p,n+m-p)$, $n\leq p\leq n+m-1$). The MCF
corresponds to a one-parameter family $X(\cdot,t)=X_{t}$ of
immersions $X_{t}:M^{n}\rightarrow N^{n+m}$ whose images
$M_{t}^{n}=X_{t}(M^{n})$ satisfy
\begin{eqnarray} \label{MCF}
\left\{
\begin{array}{lll}
 \frac{d}{dt}X(x,t)=\vec{H},\qquad \qquad &\mathrm{on} ~ M^{n}\times[0,\mathbb{T})\\
 X(x,0)=X_{0}(x),\qquad \qquad &\mathrm{on} ~M^{n},
 \end{array}
\right.
 \end{eqnarray}
for some $\mathbb{T}>0$. The MCF attracts a lot of attention since
Huisken's significant work \cite{gh1}, where, by using the method of
$L^{p}$ estimates, he proved that if $M^{n}$ is a compact strictly
convex hypersurface in the Euclidean $(n+1)$-space
$\mathbb{R}^{n+1}$, the MCF (\ref{MCF}) has a unique smooth solution
on the finite time interval $[0,\mathbb{T}_{\max})$ with
$\mathbb{T}_{\max}<\infty$, and the evolving hypersurfaces
$M^{n}_{t}$ contract to a single point as
$t\rightarrow\mathbb{T}_{\max}$. Moreover, after an area-preserving
rescaling, the rescaled hypersurfaces converge in
$C^{\infty}$-topology to a round sphere having the same area as
$M^{n}$. For the MCF (\ref{MCF}), if
\begin{eqnarray*}
\vec{H}=-X^{\perp},
\end{eqnarray*}
then the submanifold $X_{t}:M^{n}\rightarrow N^{n+m}$ is called a
\emph{self-shrinker}, which is a self-similar solution to
(\ref{MCF}).
 Here $(\cdot)^{\perp}$ denotes the normal
projection of a prescribed vector to the normal bundle of
$M_{t}^{n}$ in $N^{m+n}$. Self-shrinking solutions are important in
the study of type-I singularities of MCF. For instance, by proving
the monotonicity formula, at a given type-I singularity of the MCF,
Huisken \cite{gh2} proved that the flow is asymptotically
self-similar, which implies that in this situation the flow can be
modeled by self-shrinking solutions. If there exists a constant unit
vector $V$ such that
\begin{eqnarray*}
\vec{H}=V^{\perp},
\end{eqnarray*}
then the submanifold $X_{t}:M^{n}\rightarrow N^{n+m}$ is called a
\emph{translating soliton} of the MCF (\ref{MCF}). It is easy to see
that the translating soliton gives an eternal solution
$X_{t}=X_{0}+tV$ to (\ref{MCF}), which is called the translating
solution. Translating solitons play an important role in the study
of type-II singularities of the MCF. For instance, Angenent and
Velazquez \cite{av1,av2} gave some examples of convergence which
implies that type-II singularities of the MCF there are modeled by
translating surfaces.

From the above brief introduction, we know that translating
solutions of the MCF are special solutions to the flow equation and
are worthy of be investigated for the purpose of understanding
type-II singularities of the MCF. There exist some interesting
results which we prefer to mention here. For instance, Shahriyari
\cite{ls} proved that for the MCF, there are only three types of
complete translating graphs in $\mathbb{R}^{3}$, i.e., entire
graphs, graphs between two parallel planes, and graphs in one side
of a plane. Moreover, in the last two types, graphs are asymptotic
to planes next to their boundaries. For the case that
$N^{n+m}=\mathbb{R}^{n+m}$, Xin \cite{ylx} proved that any complete
translating soliton has infinite volume and has Euclidean volume
growth at least. Moreover, he showed that graphic translating
soliton hypersurfaces are weighted area-minimizing. Huisken
\cite{gh3} investigated graphs  over bounded domains (with
$C^{2,\alpha}$ boundary) in $\mathbb{R}^{n}$ ($n\geq2$), which are
evolving by the MCF with vertical contact angle boundary condition,
and proved that the evolution exists for all the time and the
evolving graphs converge to a constant function as time tends to
infinity (i.e., $t\rightarrow\infty$). Altschuler and Wu \cite{aw}
proved that graphs, defined over strictly convex compact domains in
$\mathbb{R}^{2}$, evolved by the nonparametric MCF with prescribed
contact angle (not necessary to be vertical), converge to
translating surfaces as $t\rightarrow\infty$. Guan \cite{bg}
investigated graphs over bounded domains in $\mathbb{R}^{n}$, which
are evolving by the nonparametric MCF with prescribed contact angle,
and proved that the flow exists for all the time. But an extra
assumption about the prescribed contact angle should be added in
order to get the asymptotical behavior of limiting solutions. Zhou
\cite{hyz} has improved Altschuler-Wu's and Guan's conclusions to
the case of general product spaces $M^{n}\times\mathbb{R}$ with
closed manifold $M^{n}$ having nonnegative Ricci curvature.

The purpose of this paper is to investigate the case of space-like
graphs evolved by the nonparametric MCF with the prescribed contact
angle boundary condition, and try to get interesting convergence
conclusions.

Throughout this paper, $M^{2}$ denotes a $2$-dimensional complete
Riemannian manifold with a metric $\sigma$ and $\Omega$ is a
strictly convex, bounded domain of $M^{2}$ with smooth boundary
$\partial\Omega$. Let $\kappa>0$ be the curvature function of
$\partial\Omega$. Assume that a point on $\Omega$ is described by
local coordinates $\{\omega^{1},\omega^{2}\}$. Let $\partial_{i}$,
$i=1,2$, be the corresponding coordinate vector fields and
$\sigma_{ij}=\sigma(\partial_{i},\partial_{j})$. Similar to the
basic introduction of geometry of graphs shown in \cite{cm,cmxx}, we
know that for the space-like graph
$\mathscr{G}:=\{(x,u(x,\cdot))|x\in\Omega\}$, defined over
$\Omega\subset M^{2}$, in the Lorentz manifold
$M^{2}\times\mathbb{R}$ with the metric $g:=\sigma_{ij}dw^{i}\otimes
dw^{j}-ds\otimes ds$, tangent vectors are given by
\begin{eqnarray*}
\vec{e}_{i}=\partial_{i}+D_{i}u\partial_{s}, \qquad i=1,2,
\end{eqnarray*}
and the corresponding upward unit normal vector is given by
\begin{eqnarray*}
\vec{\gamma}=\frac{1}{\sqrt{1-|Du|^2}}\left(\partial_s+D^{j}
u\partial_{j}\right),
\end{eqnarray*}
where $D^{j}u=\sigma^{ij}D_{i}u$ with $D$ the covariant derivative
operator on $M^{2}$. Denote by $\nabla$ the gradient operator on
$\mathscr{G}$, and then the second fundamental form
$h_{ij}dw^{i}\otimes dw^{j}$ of $\mathscr{G}$ is given by
\begin{eqnarray*}
h_{ij}=-\langle\nabla_{\vec{e}_i}
\vec{e}_j,\vec{\gamma}\rangle=\frac{1}{\sqrt{1-|Du|^2}}D_{i}D_{j}u.
\end{eqnarray*}
Moreover, the scalar mean curvature of $\mathscr{G}$ is
\begin{eqnarray} \label{1.2}
\qquad
H=\sum_{i=1}^{2}h^i_i=\frac{1}{\sqrt{1-|Du|^2}}\left(\sum_{i,k=1}^{2}g^{ik}D_{k}D_{i}u\right)=
\frac{\sum_{i,k=1}^{2}\left(\sigma^{ik}+\frac{D^{i}uD^{k}u}{1-|Du|^{2}}\right)D_{k}D_{i}u}{\sqrt{1-|Du|^2}}.
\end{eqnarray}

Let $T$ be the counterclockwise unit smooth tangent vector of
$\partial\Omega$ and $N$ be the inward unit normal vector of
$\partial\Omega$. Then one can smoothly extend $N$, $T$ to a thin
neighborhood of the boundary $\partial\Omega$ (see Subsection 2.1
for details).

In order to bring convenience to calculations in the sequel and
state our main conclusion clearly, we use the following notations.
$$v=\sqrt {1-|Du|^{2}},$$
$$g_{ij}=\sigma_{ij}-D_{i}uD_{j}u,$$
$$g^{ij}=\sigma^{ij}+\frac{D^{i}uD^{j}u}{1-|Du|^{2}},$$
$$u_{t}=\frac{\partial u}{\partial t}.$$
For vectors, $V$, $W$ or matrices $A$, $B$, we will use the
shorthand as follows
 \begin{eqnarray*}
 \langle V,W \rangle
_{g}=g^{ij}V_{i}W_{j};\quad \langle V,W\rangle
_{\sigma}=\sigma^{ij}V_{i}W_{j};\quad \langle A,B\rangle
_{g,\sigma}=g^{ij}\sigma^{kl}A_{ik}B_{jl}.
 \end{eqnarray*}
Define $g^{TN}:=g^{ij}T_{i}N_{j}$, $g^{TT}:=g^{ij}T_{i}N_{j}$, and
$g^{NN}:=g^{ij}N_{i}N_{j}$ on $\partial \Omega$. For the
second-order covariant derivatives of a prescribed function, we have
the formula
\begin{eqnarray*}
D_{V}D_{W}u=V^{i}W^{j}D_{ij}^{2}u+\langle D_{V}W,Du\rangle.
\end{eqnarray*}

For the space-like graphs $\mathscr{G}$, consider the following
initial value problem (IVP for short)
\begin{eqnarray*}
(\sharp)\qquad \left\{
\begin{array}{lll}
{u_{t}=\left(\sigma^{ij}+\frac{D^{i}uD^{j}u}{1-|Du|^{2}}\right)D_{i}D_{j}u}, \qquad \qquad &\mathrm{on} ~ \Omega\times[0,\mathbb{T}]\\
D_{N}u=\phi(x)v, \qquad \qquad &\mathrm{on} ~
\partial \Omega\times[0,\mathbb{T}]\\
u(\cdot,0)=u_{0}(\cdot),\qquad \qquad &\mathrm{on} ~\Omega_{0}
\end{array}
\right.
\end{eqnarray*}
where $\Omega_{t}=\Omega\times\{t\}$ is a slice in
$\Omega\times[0,\mathbb{T}]$, $\phi\in C^{\infty}(\partial\Omega)$,
and $u_{0}\in C^{\infty}(\overline{\Omega})$. Clearly, the IVP
$(\sharp)$ describes the evolution of space-like graphs
$\mathscr{G}$ by the mean curvature vector with the specified
contact angle, since by (\ref{1.2}) the RHS of the first evolution
equation in $(\sharp)$ equals $Hv$. For the IVP $(\sharp)$, we can
prove the following.

\begin{theorem}\label{main1.1}
If $\Omega$ is a strictly convex bounded domain in $M^{2}$ with
nonnegative Gaussian curvature,
then, for solutions to IVP $(\sharp)$, we have the followings:\\
 1. there exists some constant $c_{1}:= c_{1}(u_{0},\kappa_{0},\phi_{0},\phi_{1},\phi_{2})>0$ so that $|Du|^{2}\leq c_{1}<1$ on $\overline{\Omega}\times[0,\infty)$, thus $u(x,t)\in C^{\infty}(\overline{\Omega}\times[0,\infty))$, where
\begin{eqnarray*}
 \kappa_0:=\min\limits_{x\in\partial\Omega}\kappa(x), \quad
 \phi_{0}:=\min\limits_{x\in\partial\Omega}\phi(x), \quad
\phi_{1}:=\max\limits_{x\in\partial\Omega}\phi(x), \quad
\phi_{2}:=\max\limits_{x\in\partial\Omega}|D_{T}\phi(x)|;
 \end{eqnarray*}
 2. $u(x,t)$ converges as $t \rightarrow \infty$ to a space-like surface $u_{\infty}$ (unique up to translation) which moves at a constant speed $c_{3}$ given by (\ref{2.15});\\
 3. if $\int_{\partial\Omega} \phi = 0$ then $c_{3}=0$, hence $u_{\infty}$ is a maximal space-like surface in the Lorentz manifold $M^{2}\times\mathbb{R}$.
\end{theorem}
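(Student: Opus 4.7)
The plan is to adapt the Altschuler--Wu--Guan--Zhou strategy for the Riemannian non-parametric MCF with prescribed contact angle to the present space-like Lorentzian setting. The crucial difference is that $v = \sqrt{1-|Du|^2}$ must be kept strictly away from zero, not merely bounded; that is, one needs the \emph{one-sided} gradient estimate $|Du|^2 \leq c_1 < 1$ on $\overline{\Omega}\times[0,\infty)$. Once this is in hand, the equation in $(\sharp)$ becomes uniformly parabolic with a uniformly oblique boundary condition, so long-time existence and $C^\infty$ regularity on $\overline{\Omega}\times[0,\infty)$ follow from Krylov--Safonov plus Schauder theory and bootstrapping. Part 2 then reduces to an oscillation-decay analysis of $u_t$, and part 3 to a divergence-form identity for $c_3$.

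Concretely, I would proceed in the following order. First, establish a uniform height estimate $\sup_t \mathrm{osc}_\Omega u(\cdot,t)\leq C$ by comparison against linear-in-time barriers of the form $a+bt\pm \psi(x)$; this is needed a priori because the gradient estimate couples to the oscillation of $u$. Second, derive the gradient bound by applying the maximum principle to the auxiliary function $w:=v^{-1}$ (or $-\log v$); after differentiating $(\sharp)$ twice, $w$ satisfies a parabolic differential inequality whose Bochner-type terms involve the ambient sectional curvatures of $M^2\times\mathbb{R}$, and the hypothesis that the Gaussian curvature of $M^2$ is nonnegative is used precisely to force these terms to have the favourable sign. At an interior maximum the standard maximum principle closes the estimate. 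At a boundary maximum $x_0\in\partial\Omega$, one differentiates the contact-angle condition $D_N u=\phi v$ tangentially and exploits the strict convexity of $\Omega$ (curvature $\kappa\geq\kappa_0>0$) together with the shape operator of $\partial\Omega$; comparing the resulting boundary inequality with the Hopf boundary point lemma produces a bound $w(x_0)\leq C(\kappa_0,\phi_0,\phi_1,\phi_2)$, which is the sought gradient estimate.

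For part 2, differentiating $(\sharp)$ in $t$ shows that $u_t$ satisfies a linear parabolic equation on $\overline{\Omega}$, while $\partial_t(D_N u-\phi v)=0$ gives a linear oblique boundary condition for $u_t$. The parabolic Harnack / strong maximum principle for linear oblique-boundary problems then yields exponential decay of $\mathrm{osc}_\Omega u_t(\cdot,t)$, so $u_t(\cdot,t)\to c_3$ uniformly for some constant $c_3$, and the limit $u_\infty$ satisfies the translator equation $Hv=c_3$. Writing $H$ in divergence form as $H=\mathrm{div}_\sigma(Du/v)$ (up to sign) and integrating over $\Omega$ against the boundary condition yields formula (2.15) for $c_3$ as a multiple of $\int_{\partial\Omega}\phi$ divided by $\int_\Omega v^{-1}$. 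Part 3 is then immediate: $\int_{\partial\Omega}\phi=0$ forces $c_3=0$, so $u_\infty$ has vanishing mean curvature and is therefore a maximal space-like surface.

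The main technical obstacle is the boundary gradient estimate. Unlike the Riemannian case, where $v=\sqrt{1+|Du|^2}\geq 1$ makes $v$ trivially bounded below, the Lorentz obliqueness $D_N u=\phi v$ couples $v$ nonlinearly to $Du$ in a way that a priori permits $v\to 0$ at a boundary point; closing the estimate requires a careful balance of the convexity of $\Omega$ against the boundary data $\kappa_0,\phi_0,\phi_1,\phi_2$, and it is at this step that the nonnegative Gaussian curvature of $M^2$ enters nontrivially through the Ricci identity commuting successive covariant derivatives of $u$. The interior gradient estimate and the linear-parabolic convergence arguments are, by contrast, comparatively routine once the correct auxiliary function and sign convention have been identified.
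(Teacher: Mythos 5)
Your plan is correct in outline and reaches the same conclusions, but it departs from the paper's actual argument at three places. First, before the gradient estimate the paper does not prove a height/oscillation bound at all; instead (Lemma~\ref{lemma2.2}) it shows directly that $|u_t|^2$ is parabolic-subharmonic in the interior and that a boundary maximum is ruled out via (\ref{2.3}), giving a clean bound $|u_t|^2\leq c_0(u_0)$ by the maximum principle. It is this $u_t$-bound, not the oscillation of $u$, that feeds into the boundary case of the gradient estimate — and note the oscillation bound cannot be obtained until after convergence is known, since $u$ grows linearly in $t$ when $c_3\neq 0$ (Corollary~\ref{coro} only gives $|u(x,t)-c_3t|\leq c_8$). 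Your barrier argument would indirectly give a $u_t$-bound too, but the stated motivation ("the gradient estimate couples to the oscillation of $u$") is not quite right. Second, the paper applies the maximum principle to $|Du|^2$ rather than to $v^{-1}$ or $-\log v$. Lemma~\ref{Evo-Du} shows $|Du|^2$ satisfies a parabolic inequality with a good-sign term $-K|Du|^2$ coming from the Ricci identity on $M^2$ (only the Gaussian curvature of the fibre $M^2$ enters, not the full ambient sectional curvatures of $M^2\times\mathbb{R}$); the boundary case is closed by an explicit algebraic reduction (equations (\ref{2.10})--(\ref{2.14})) leading to $\phi v u_t+\kappa(1-v^2)+\dots\leq 0$, which together with the $u_t$-bound and strict convexity ($\kappa\geq\kappa_0>0$) gives the quantitative bound $c_1<1$. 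Your $v^{-1}$-approach should also close, but note the two-case split ($|D_Tu|\lessgtr\frac12$) in the paper handles the near-degenerate tangential gradient cleanly and is worth adopting. Third, for convergence the paper does not prove exponential decay of $\mathrm{osc}\,u_t$; Lemma~\ref{lemma3.1} instead studies the difference $\widetilde{\mathcal{U}}=\widetilde u_1-\widetilde u_2$ of two solutions, shows its oscillation is strictly decreasing by the strong maximum principle, and concludes by a compactness/translation argument that the limit must be constant; uniqueness of the elliptic translator (Theorem~2.6, via an $\varepsilon$-regularized problem $(\ast\ast)$) then identifies the limit as $u+c_3t$. Your Harnack-based decay route also works and is arguably cleaner, but is a genuinely different (and stronger) statement than what the paper establishes. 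The divergence-form derivation of (\ref{2.15}) and the $\int_{\partial\Omega}\phi=0\Rightarrow c_3=0$ conclusion agree with the paper, though the paper also proves $\lim_{t\to\infty}u_t=0$ via an integral monotonicity identity in Lemma~\ref{lemma3.3}, which your sketch omits.
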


\begin{remark}
\rm{ Clearly, if $M^{2}\equiv\mathbb{R}^{2}$, Theorem \ref{main1.1}
would give the existence of translating solutions to the space-like
nonparametric MCF with the prescribed contact angle boundary
condition in the Minkowski $3$-space $\mathbb{R}^{2,1}$. Besides, it
is worth pointing out one thing here, that is, it might be a little
surprise to readers that in order to get the existence of
translating solutions, our assumption here (only the strictly convex
assumption for the bounded domain $\Omega$) is \emph{weaker} than
that in \cite[Theorem 1.2]{aw}.
 }
\end{remark}

The paper is organized as follows. The uniform estimates for the
time derivative and the gradient of the solution to the IVP
$(\sharp)$ have been given in Section 2, which can be used to get
the solvability of the BVP $(\ast)$, the elliptic version of
$(\sharp)$, and the long-time existence of the IVP $(\sharp)$. The
existence of translating solutions to $(\sharp)$ has been shown in
Section 3.

\section{Estimates}

\subsection{The boundary}

\

Let $\{\theta,r\}$, with $r(x)$ the Riemannian distance function
$d(x,\partial\Omega)$ from $x$ to the boundary $\partial\Omega$, be
the local coordinates for a thin neighborhood of $\partial\Omega$
such that
\begin{eqnarray*}
\left\langle\frac{\partial}{\partial{r}},\frac{\partial}{\partial{\theta}}\right\rangle=0,
\quad
\left\langle\frac{\partial}{\partial{r}},\frac{\partial}{\partial{r}}\right\rangle=1,
\end{eqnarray*}
and
\begin{eqnarray*}
\left\{\frac{\partial}{\partial{r}},\frac{\partial}{\partial{\theta}}\right\}_{\partial\Omega}=\{N,T\}.
\end{eqnarray*}
Define a function $\varphi$ such that
$\left|\varphi^{-1}\frac{\partial}{\partial{\theta}}\right|^{2}=1$.
Then one can get the extended normal and tangent vectors to be the
orthonormal frame
$\left\{\frac{\partial}{\partial{r}},\varphi^{-1}\frac{\partial}{\partial{\theta}}\right\}$
of the thin neighborhood of $\Omega$, which, with the abuse of
notations, is also denoted by $\{N,T\}$. That is,
$\left\{\frac{\partial}{\partial{r}},\varphi^{-1}\frac{\partial}{\partial{\theta}}\right\}=\{N,T\}$.
By \cite[Lemma 2.1]{aw}, we have

\begin{lemma} \label{lemma2.1}
On $\partial\Omega$, one has
\\(i) $\nabla_{T}T=kN,\quad \nabla_{T}N=-kT,\quad \nabla_{N}T=\nabla_{N}N=0 $;
\\(ii) for any $f\in C^{\infty}(\bar{\Omega})$, $D_{N}D_{T}f=D_{T}D_{N}f+kD_{T}f$.\\
\end{lemma}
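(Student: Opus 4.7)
The plan is to establish part (i) directly from the definition of the orthonormal frame $\{N,T\}$ and the fact that the integral curves of $N=\partial/\partial r$ are unit-speed normal geodesics of $M^{2}$, and then to obtain part (ii) as an immediate consequence of (i) together with the Lie-bracket identity.

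For (i), the relation $\nabla_{T}T=kN$ on $\partial\Omega$ is the defining relation for the geodesic curvature $k$ of the curve $\partial\Omega$ inside the surface $M^{2}$, with the sign fixed by the conventions that $N$ points inward and $\partial\Omega$ is strictly convex (so $k>0$). To obtain $\nabla_{T}N=-kT$, I will use that $|N|^{2}\equiv 1$ throughout the tubular neighborhood (since $N=\partial/\partial r$ is unit by construction), which forces $\nabla_{T}N\perp N$; the coefficient in the direction of $T$ is then determined by differentiating $\langle T,N\rangle=0$ along $T$ and using the first identity:
\begin{equation*}
0=T\langle T,N\rangle=\langle\nabla_{T}T,N\rangle+\langle T,\nabla_{T}N\rangle=k+\langle T,\nabla_{T}N\rangle.
\end{equation*}
Next, $\nabla_{N}N=0$ will hold throughout the tubular neighborhood because $|\nabla r|\equiv 1$, so differentiating this identity gives $\mathrm{Hess}(r)(\cdot,\nabla r)\equiv 0$, which is exactly $\nabla_{\nabla r}\nabla r=0$. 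To deduce $\nabla_{N}T=0$ on $\partial\Omega$, I will combine: (a) $|T|^{2}\equiv 1$ in the neighborhood (since $T=\varphi^{-1}\partial/\partial\theta$ was defined to be unit), which yields $\langle\nabla_{N}T,T\rangle=0$; and (b) the derivative of $\langle T,N\rangle=0$ along $N$, which yields $\langle\nabla_{N}T,N\rangle=-\langle T,\nabla_{N}N\rangle=0$. Orthogonality to the entire frame $\{T,N\}$ on $\partial\Omega$ forces $\nabla_{N}T=0$ there.

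For (ii), the key point is the paper's convention, recorded just before the lemma, that $D_{V}D_{W}u=V^{i}W^{j}D^{2}_{ij}u+\langle D_{V}W,Du\rangle$; this is precisely $\mathrm{Hess}(u)(V,W)+(\nabla_{V}W)(u)=V(W(u))$, so $D_{V}D_{W}$ is ordinary iterated directional differentiation. Consequently
\begin{equation*}
D_{N}D_{T}f-D_{T}D_{N}f=N(Tf)-T(Nf)=[N,T]f=(\nabla_{N}T-\nabla_{T}N)f,
\end{equation*}
and by (i) the bracket $\nabla_{N}T-\nabla_{T}N$ equals $0-(-kT)=kT$ on $\partial\Omega$, so the right-hand side is $kT(f)=kD_{T}f$, which is the claimed identity. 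The only real subtlety is bookkeeping with sign conventions (inward $N$, counterclockwise $T$, $k>0$ by convexity) and recognizing that the notation $D_{V}D_{W}$ stands for iterated directional differentiation rather than for Hessian evaluation; once these are settled, all computations are routine.
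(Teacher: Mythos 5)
Your proof is correct and complete. Note that the paper itself does not prove this lemma; it simply cites \cite[Lemma 2.1]{aw} (Altschuler--Wu), so there is no in-paper argument to compare against. Your derivation is the standard one: the Fermi coordinates make $N=\partial_r=\nabla r$ a unit geodesic field, giving $\nabla_N N=0$; orthonormality of $\{T,N\}$ throughout the collar combined with $\nabla_N N=0$ forces $\nabla_N T=0$; the remaining two identities in (i) follow from the definition of geodesic curvature and differentiating $\langle T,N\rangle=0$ along $T$; and (ii) reduces to the torsion-free bracket computation $[N,T]=\nabla_N T-\nabla_T N=kT$ once one observes, as you correctly do, that the paper's convention $D_V D_W u = V^i W^j D^2_{ij}u + \langle D_V W, Du\rangle$ means $D_V D_W u = V(W(u))$ (iterated directional derivative, not the symmetric Hessian — under the Hessian reading (ii) would be false). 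This is a clean, self-contained filling-in of a step the paper outsources to a reference.
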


From the boundary condition of ($\sharp$), it is not hard to verify
the following facts
\begin{equation} \label{2.1}
|D_{N}u|^{2}=\phi^{2}v^{2},
\end{equation}
\begin{equation} \label{2.2}
|D_{T}u|^{2}=1-(1+\phi^{2})v^{2}.
\end{equation}
Differentiating conditions $(\ref{2.1})-(\ref{2.2})$ in the time and
tangential direction, we know that all the derivatives of $u$ on
$\partial\Omega$, except $D_{N}D_{N}U$, can be given in terms of the
first derivatives of $u$. More precisely, we have
\begin{equation} \label{2.3}
D_{N}u_{t}=-\phi\frac{D{u}Du_{t}}{\sqrt{1-|D{u}|^{2}}},
\end{equation}
\begin{equation} \label{2.4}
D_{T}D_{N}u=\phi D_{T}v+v D_{T}\phi,
\end{equation}\begin{equation} \label{2.5}
D_{N}D_{T}u=\phi D_{T}v+v D_{T}\phi+kD_{T}u,
\end{equation}
\begin{equation} \label{2.6}
D_{T}D_{T}u=-\frac{v(1+\phi^{2})D_{T}v +v^{2}\phi
D_{T}\phi}{D_{T}u}.
\end{equation}
Besides, elements of the inverse of the metric matrix of the
space-like graphs in $M^{2}\times\mathbb{R}$ are given by
\begin{equation} \label{2.7}
g^{TT}=\frac{1-(D_{N}u)^{2}}{v^{2}},
\end{equation}
\begin{equation} \label{2.8}
g^{NN}=1+\phi^{2},
\end{equation}
\begin{equation} \label{2.9}
g^{NT}=g^{TN}=\frac{D_{N}u D_{T}u}{v^{2}}.
\end{equation}

\subsection{Existence of solutions}

\

The key point of the existence for small time and the uniqueness of
solutions to IVP ($\sharp$) is to show that the evolution equation
in ($\sharp$) is uniformly parabolic at $t=0$, which can be assured
by the assumption that the initial graphic surface over $\Omega$ is
space-like. In fact, by the linearization theory (see \cite{ltu})
and the inverse function theorem (see, e.g., \cite{s}), together
with the space-like graphic assumption, the short-time existence and
the uniqueness of solutions to IVP ($\sharp$) can be obtained.

Assume that IVP ($\sharp$) has smooth solutions on the time interval
$[0,\mathbb{T}]$, which means all derivatives of $u$ have bounds on
$[0,\mathbb{T}]$. In the following, we establish a \emph{time
independent} priori estimate for the gradient of the solution (see
Theorem \ref{main2.3}), which leads to the preserving space-like
property for the evolving graphic surfaces in
$M^{2}\times\mathbb{R}$, and then turn the quasilinear evolution
equation into a uniformly parabolic equation. Furthermore, by the
standard theory of the second-order parabolic PDE, the higher order
regularity follows, which leads to the long-time existence of smooth
solutions of the IVP ($\sharp$).

\subsection{The time derivative estimate}

\

By the maximum principe of the second-order parabolic PDE, we have
the following.

\begin{lemma} \label{lemma2.2}
$\sup\limits_{\overline{\Omega}\times[0,\mathbb{T}]}|u_{t}|^{2}=\sup\limits_{\Omega_{0}}|u_{t}|^{2}$.
That is, there exists some positive constant
$c_{0}=c_{0}(u_{0})\in\mathbb{R}^{+}$ such that for any
$(x,t)\in\overline{\Omega}\times[0,\mathbb{T}]$, we have
\begin{eqnarray*}
|u_{t}|^{2}(x,t)\leq c_{0}.
\end{eqnarray*}
\end{lemma}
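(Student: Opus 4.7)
The plan is to set $w := u_t$, derive from $(\sharp)$ a linear uniformly parabolic equation satisfied by $w$ together with a homogeneous oblique derivative boundary condition, and then conclude via the parabolic maximum principle applied to both $w$ and $-w$.

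First, I would differentiate the evolution equation of $(\sharp)$, written compactly as $u_t = g^{ij}(Du)\,D_iD_ju$, in the $t$-variable. This produces
\[
w_t = a^{ij}\,D_iD_jw + b^i\,D_iw \qquad \text{on } \Omega\times[0,\mathbb{T}],
\]
where $a^{ij} = g^{ij}(Du) = \sigma^{ij} + D^iu\,D^ju/(1-|Du|^2)$ and $b^i$ is built from the partial derivatives of $g^{ij}$ in its $Du$-argument contracted with $D^2u$. Crucially, no zeroth-order term in $w$ arises since the right-hand side of the $(\sharp)$ equation depends only on $Du$ and $D^2u$, not on $u$ or $t$ explicitly. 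On $[0,\mathbb{T}]$, where $u$ is smooth with $|Du|^2<1$, the coefficients $a^{ij}$ and $b^i$ are bounded and $a^{ij}$ is uniformly positive definite, so the equation for $w$ is linear uniformly parabolic.

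Second, differentiating the contact angle boundary condition in $t$ yields $(2.3)$, which I would recast as the homogeneous oblique derivative condition
\[
D_N w + \frac{\phi}{v}\,\sigma^{ij}D_iu\,D_jw = 0 \qquad \text{on } \partial\Omega\times[0,\mathbb{T}].
\]
Setting $\beta^i := N^i + (\phi/v)\,D^iu$, obliqueness follows from
\[
\langle \beta, N\rangle_\sigma = 1 + \frac{\phi}{v}\,D_Nu = 1 + \phi^2 > 0,
\]
where the boundary condition of $(\sharp)$ has been used to evaluate $D_Nu = \phi v$. Hence $\beta$ has a strictly positive inward-normal component, uniformly along $\partial\Omega\times[0,\mathbb{T}]$.

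Finally, I would invoke the weak maximum principle for linear parabolic equations without zeroth-order term: any maximum of $w$ over $\bar\Omega\times[0,\mathbb{T}]$ is attained on the parabolic boundary $\{t=0\}\cup(\partial\Omega\times[0,\mathbb{T}])$. A lateral boundary maximum at some $t_0>0$ would contradict Hopf's lemma for oblique derivative conditions, since at such a non-constant maximum point one would have $\beta\cdot Dw<0$ strictly, while our boundary equation enforces $\beta\cdot Dw=0$. Hence the maximum of $w$ is attained at $t=0$, and the same reasoning applied to $-w$ bounds the minimum. Consequently $\sup_{\bar\Omega\times[0,\mathbb{T}]}|u_t|^2 = \sup_{\Omega_0}|u_t|^2$, and taking $c_0 := c_0(u_0) = \sup_{\Omega_0}|u_t|^2$, finite by the smoothness of $u_0$, proves the lemma. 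The only non-routine step is verifying the oblique character of the boundary condition, which reduces to the identity $\langle\beta,N\rangle_\sigma=1+\phi^2>0$; everything else is a textbook application of the parabolic maximum principle and needs no further estimates.
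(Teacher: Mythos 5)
Your proof is correct and follows essentially the same route as the paper: differentiate the evolution equation and the Neumann condition in $t$, obtain a linear uniformly parabolic equation with no zeroth-order term and a homogeneous oblique derivative boundary condition, push the maximum to the parabolic boundary, and then use Hopf's lemma to exclude lateral boundary extrema. The only stylistic difference is that you apply the maximum principle and Hopf's lemma to $w=u_t$ and $-w$ separately (making the obliqueness $\langle\beta,N\rangle_\sigma = 1+\phi^2>0$ explicit), whereas the paper works with $|u_t|^2$ and unpacks the boundary argument by hand, showing $D_Tu_t=0$ at the extremum forces $D_Nu_t=0$ via $(2.3)$; the substance is the same.
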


\begin{proof}
We first show that the maximum of $u_{t}$ must occur on
$\left(\partial\Omega\times[0,\mathbb{T}]\right)\cup \Omega_{0}$.
Let $(g^{ij})^{'}$ be the differential of
$g^{ij}=g^{ij}(x,u,Du)=g^{ij}(x,z,p)$ with respect to $p$. By a
direct computation, we have
\begin{eqnarray*}
\frac{\partial}{\partial t} |u_{t}|^{2}&=&2u_{t}\frac{\partial u_{t}}{\partial t}\\
&=&
2u_{t}\left(\frac{\partial g^{ij}}{\partial t} u_{ij}+g^{ij}\frac{\partial u_{ij}}{\partial t}\right)\\
&=&
2u_{t}\left(\frac{\partial g^{ij}}{\partial u^{k}}\frac{\partial u^{k}}{\partial t}u_{ij}+g^{ij}D_{i}D_{j}u_{t}\right)\\
&=&
2u_{t} \frac{\partial g^{ij}}{\partial u^{k}}\frac{\partial u^{k}}{\partial t}u_{ij}+g^{ij}(D_{i}D_{j}|u_{t}|^{2}-2D_{i}u_{t}D_{j}u_{t})\\
&=&
2u_{t} \frac{\partial g^{ij}}{\partial u^{k}}\frac{\partial u^{k}}{\partial t}u_{ij}+g^{ij}D_{i}D_{j}|u_{t}|^{2}-2\langle Du_{t},Du_{t}\rangle_{g}\\
&=&
2u_{t}\frac{\partial g^{ij}}{\partial u_{k}}\frac{\partial(\sigma^{ki}u_{i})}{\partial t}u_{ij}+g^{ij}D_{i}D_{j}|u_{t}|^{2}-2\langle Du_{t},Du_{t}\rangle_{g}\\
&=&
\frac{\partial g^{ij}}{\partial u^{k}}\sigma^{ki}u_{ij}D_{i}|u_{t}|^{2}+g^{ij}D_{i}D_{j}|u_{t}|^{2}-2\langle Du_{t},Du_{t}\rangle_{g}\\
&=&D_{i}D_{j}u\left\langle(g^{ij})^{'},\nabla|u_{t}|^{2}\right\rangle_{\sigma}+g^{ij}D_{i}D_{j}|u_{t}|^{2}-2\langle
Du_{t},Du_{t}\rangle_{g},
\end{eqnarray*}
where
$D_{i}D_{j}u\left\langle(g^{ij})^{'},\nabla|u_{t}|^{2}\right\rangle_{\sigma}=\sum\limits_{i,j,k=1}^{2}
D_{i}D_{j}u\left(\frac{\partial g^{ij}}{\partial p^{k}}\nabla
_{k}|u_{t}|^{2}\right)$. The boundedness of all the coefficients of
the above evolution equation in the bounded domain
$\overline{\Omega}\times[0,\mathbb{T}]$ follows by the continuity,
which implies that
\begin{eqnarray*}
 \sup\limits_{\overline{\Omega}\times[0,\mathbb{T}]}|u_{t}|^{2}=\sup\limits_{\left(\partial\Omega\times[0,\mathbb{T}]\right)\cup
\Omega_{0}}|u_{t}|^{2}
\end{eqnarray*}
 by directly applying the weak maximum
principle.

Next, we expel the possibility that the maximum occurs at
$(\xi,\tau)\in\partial\Omega\times[0,\mathbb{T}]$. Assume that
$\max\limits_{\Omega\times\{t\}}|u_{t}|^{2}=|u_{t}|^{2}(\xi,\tau)>0$,
we have $\left(D_{T}u_{t}\right)(\xi,\tau)=0$. By (\ref{2.3}), it
follows that
\begin{eqnarray*}
(D_{N}u_{t})(\xi,\tau)&=&-\phi \frac{D_{N}uD_{N}u_{t}+D_{T}uD_{T}u_{t}}{\sqrt{1-|Du|^{2}}}(\xi,\tau)\\
&=&
-\phi \frac{D_{N}uD_{N}u_{t}}{\sqrt{1-|Du|^{2}}}(\xi,\tau)\\
&=& -\phi^{2}(D_{N}u_{t})(\xi,\tau),
\end{eqnarray*}
which implies $(1+\phi^{2})(D_{N}u_{t})(\xi,\tau)=0$, i.e.,
$(D_{N}u_{t})(\xi,\tau)=0$. Therefore, by the Hopf Lemma,
$\frac{\partial}{\partial t}|u_{t}|^{2}(\xi,\tau)\leq 0$, and then
the conclusion of Lemma \ref{lemma2.2} follows.
\end{proof}

\subsection{The gradient estimate}
\

First, we need the evolution equation of $|Du|^{2}$.
\begin{lemma}\label{Evo-Du}
We have  the evolution equation of $|Du|^{2}$ as follows
\begin{eqnarray*}
\frac{\partial}{\partial t}|Du|^{2}=
\frac{D_{k}|Du|^{2}D_{i}|Du|^{2}}{v^{2}}g^{ik}+g^{ij}D_{i}D_{j}|Du|^{2}-|D^{2}u|^{2}-|D|Du|^{2}|^{2}-K|Du|^2,
\end{eqnarray*}
where $K$ denotes the Gaussian curvature of $M^{2}$.
\end{lemma}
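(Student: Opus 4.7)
The proof is a direct calculation based on two structural ingredients. First, $g^{ij}=\sigma^{ij}+u^iu^j/v^2$ is a rational expression in $Du$, so every derivative of $g^{ij}$ can be written explicitly. Second, because $M^2$ is two-dimensional, its Riemann tensor takes the form $R_{abcd}=K(\sigma_{ac}\sigma_{bd}-\sigma_{ad}\sigma_{bc})$, so every Ricci-identity correction eventually reduces to the Gaussian curvature $K$.

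Starting from $|Du|^2=\sigma^{lm}u_lu_m$ and $u_t=g^{ij}u_{ij}$, I first write
\begin{equation*}
\partial_t|Du|^2=2u^lD_lu_t=2u^l(D_lg^{ij})u_{ij}+2u^lg^{ij}D_lu_{ij}.
\end{equation*}
For the first summand I would differentiate $g^{ij}=\sigma^{ij}+u^iu^j/v^2$, producing terms of the shapes $u_l^iu^j/v^2$ and $u^iu^ju^mu_{ml}/v^4$. Contracting with $2u^lu_{ij}$ and repeatedly applying $D_k|Du|^2=2u^mu_{mk}$, these regroup exactly into $v^{-2}\,g^{ik}D_i|Du|^2D_k|Du|^2$, the first term on the claimed right-hand side; no curvature of $M^2$ enters at this step.

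For the second summand, the Ricci identity applied to the $1$-form $u_j$ gives $D_lu_{ij}=D_iu_{lj}-R_{lij}{}^mu_m$, and combined with the Leibniz identity $D_iD_j|Du|^2=2u_i^lu_{lj}+2u^lD_iu_{lj}$, one obtains
\begin{equation*}
2u^lg^{ij}D_lu_{ij}=g^{ij}D_iD_j|Du|^2-2g^{ij}u_i^lu_{lj}-2g^{ij}R_{lij}{}^mu^lu_m.
\end{equation*}
The Hessian piece $2g^{ij}u_i^lu_{lj}$ is then split, by expanding $g^{ij}=\sigma^{ij}+u^iu^j/v^2$ and using $u^iu_{ij}=\frac{1}{2}D_j|Du|^2$, into the two negative contributions $|D^2u|^2$ (with the $(g,\sigma)$ inner product defined in the paper) and $|D|Du|^2|^2$. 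For the curvature piece, the two-dimensional form of $R_{abcd}$ together with the auxiliary identities $g^{ij}u_iu_j=|Du|^2/v^2$ and $g^{ij}\sigma_{ij}=2+|Du|^2/v^2$ force the $v^{-2}$-contributions to cancel and leave precisely $-K|Du|^2$.

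The main obstacle is the bookkeeping: several expressions of the shapes $u^iu^ju_{ij}/v^2$ and $(u^iD_i|Du|^2)^2/v^4$ must be reshuffled into the compact forms appearing on the right-hand side, and the sign and factor conventions of the Ricci identity have to be tracked so that the Gaussian curvature ends up with the correct sign and coefficient. Once these conventions are fixed and the $g,\sigma$ norms are unpacked consistently with the notation $\langle A,B\rangle_{g,\sigma}=g^{ij}\sigma^{kl}A_{ik}B_{jl}$ introduced earlier, no further ingredient is needed: everything follows by collecting terms.
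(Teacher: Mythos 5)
Your proposal follows essentially the same route as the paper's proof: both start from $\partial_t|Du|^2=2u^lD_lu_t=2u^l(D_lg^{ij})u_{ij}+2u^lg^{ij}D_lu_{ij}$, recognize the first summand as $v^{-2}g^{ik}D_i|Du|^2\,D_k|Du|^2$ after differentiating $g^{ij}$, commute derivatives in the second summand to produce $g^{ij}D_iD_j|Du|^2$ plus a Hessian-squared term and a curvature term, and finally use the two-dimensional reduction $R_{abcd}=K(\sigma_{ac}\sigma_{bd}-\sigma_{ad}\sigma_{bc})$ so only the Gaussian curvature survives.

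Two small comments. First, on the Hessian split: as the paper's own computation shows, $-2g^{ij}\sigma^{mk}u_{kj}u_{mi}$ is decomposed by $g^{ij}=\sigma^{ij}+u^iu^j/v^2$ into the pure $\sigma$-part (which is what the paper denotes $-|D^2u|^2$) and the $u^iu^j/v^2$-part (which, via $u^iu_{ij}=\tfrac12 D_j|Du|^2$, becomes $-|D|Du|^2|^2$). If you instead read $|D^2u|^2$ as the $(g,\sigma)$-inner-product quantity $g^{ij}\sigma^{kl}u_{ik}u_{jl}$, that already absorbs the $u^iu^j/v^2$-piece, so you would be double-counting when you also record a separate $|D|Du|^2|^2$ term. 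Second, on the curvature term: the paper disposes of the $u^iu^j/v^2$-contribution to $-2g^{ij}u^mu^lR_{limj}$ immediately, observing that $u^iu^jR_{limj}u^lu^m=0$ because the Riemann tensor is antisymmetric in the first index pair while $u^iu^l$ is symmetric, and only then invokes the 2D form on the remaining $\sigma^{ij}$-trace. Your route — expanding the full $g^{ij}$ against the 2D form and using $g^{ij}u_iu_j=|Du|^2/v^2$ and $g^{ij}\sigma_{ij}=2+|Du|^2/v^2$ to see the $v^{-2}$-terms cancel — reaches the same place but does more algebra; the antisymmetry shortcut is the cleaner observation. Neither point affects the validity of the method, which matches the paper's.
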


\begin{proof}
First, by
direct computations, we have
 \begin{eqnarray*}
\frac{\partial}{\partial t}|Du|^{2}=\frac{\partial}{\partial
t}(u^{m}u_{m})=2u^{m}(u_{m})_{t}=2u^{m}(u_{t})_{m},
\end{eqnarray*}
\begin{eqnarray*}
(u_{t})_{m}&=&(g^{ij}u_{ij})_{m}\\
&=& (g^{ij})_{m}u_{ij}+g^{ij}(u_{ij})_{m},
\end{eqnarray*}
\begin{eqnarray*}
(g^{ij})_{m}&=&\left(\sigma^{ij}+\frac{u^{i}u^{j}}{1-|Du|^{2}}\right)_{m}\\
&=&
\frac{(u^{i})_{m}u^{j}}{1-|Du|^{2}}+\frac{u^{i}(u^{j})_{m}}{1-|Du|^{2}}+\frac{2u^{k}(u_{k})_{m}u^{i}u^{j}}{(1-|Du|^{2})^{2}},
\end{eqnarray*}
and
 \begin{eqnarray*}
(|Du|^{2})_{ij}=2\sigma^{mk}u_{kj}u_{mi}+2\sigma^{mk}u_{k}u_{mij}.
\end{eqnarray*}
Therefore, he evolution equation for the  gradient is given as
follows
\begin{eqnarray*}
\frac{\partial}{\partial t}|Du|^{2}&=&2u^{m}\left[\frac{(u^{i})_{m}u^{j}}{1-|Du|^{2}}+\frac{u^{i}(u^{j})_{m}}{1-|Du|^{2}}+\frac{2u^{k}(u_{k})_{m}u^{i}u^{j}}{(1-|Du|^{2})^{2}}\right]u_{ij}+2g^{ij}u^{m}(u_{ij})_{m}\\
&=&
2u^{m}\left[\frac{(\sigma^{ik}u_{k})_{m}u^{j}}{v^{2}}+\frac{u^{i}(\sigma^{jk}u_{k})_{m}}{v^{2}}+\frac{2u^{k}(u_{k})_{m}u^{i}u^{j}}{v^{4}}\right]u_{ij}+2g^{ij}u^{m}(u_{ij})_{m}\\
&=&
2u^{m}\left(\frac{\sigma^{ik}u_{km}u^{j}}{v^{2}}+\frac{u^{i}\sigma^{jk}u_{km}}{v^{2}}+\frac{2u^{k}u_{km}u^{i}u^{j}}{v^{4}}\right)u_{ij}+2g^{ij}u^{m}(u_{ij})_{m}\\
&=&
2u^{m}\frac{u_{km}}{v^{2}}\left(\sigma^{ik}u^{j}+\sigma^{jk}u^{i}+\frac{2u^{k}u^{i}u^{j}}{v^{2}}\right)u_{ij}\\
&& \qquad +g^{ij}\left(D_{i}D_{j}|Du|^{2}-2\sigma^{mk}u_{kj}u_{mi}-2u^{m}u_{l}R^{l}_{imj}\right)\\
&=&
\frac{2u^{m}u_{km}\sigma^{ik}u^{j}u_{ij}}{v^{2}}+\frac{2u^{m}u_{km}\sigma^{ik}u^{j}u_{ij}}{v^{2}}+\frac{2u^{m}2^{k}u_{km}u^{i}u^{j}u_{ij}}{v^{4}}\\
&&+g^{ij}D_{i}D_{j}|Du|^{2}-2g^{ij}\sigma^{mk}u_{kj}u_{mi}-2g^{ij}u^{m}u^{l}R_{limj}\\
&=&
\frac{4u^{m}u_{km}\sigma^{ik}u^{j}u_{ij}}{v^{2}}+\frac{4u^{m}u^{k}u_{km}u^{i}u^{j}u_{ij}}{v^{4}}\\
&&+
g^{ij}D_{i}D_{j}|Du|^{2}-2g^{ij}\sigma^{mk}u_{kj}u_{mi}-2g^{ij}u^{m}u^{l}R_{limj}\\
&=&
\frac{4u^{m}u_{km}u^{j}u_{ij}}{v^{2}}\left(\sigma^{ik}+\frac{u^{k}u^{i}}{v^{2}}\right)+g^{ij}D_{i}D_{j}|Du|^{2}-2g^{ij}\sigma^{mk}u_{kj}u_{mi}-2g^{ij}u^{m}u^{l}R_{limj}\\
&=&
\frac{2u^{m}u_{km}2u^{j}u_{ij}}{v^{2}}g^{ik}+g^{ij}D_{i}D_{j}|Du|^{2}-2g^{ij}\sigma^{mk}u_{kj}u_{mi}-2g^{ij}u^{m}u^{l}R_{limj}\\
&=&
\frac{D_{k}|Du|^{2}D_{i}|Du|^{2}}{v^{2}}g^{ik}+g^{ij}D_{i}D_{j}|Du|^{2}-2g^{ij}\sigma^{mk}u_{kj}u_{mi}-2g^{ij}u^{m}u^{l}R_{limj}\\
&=&
\frac{D_{k}|Du|^{2}D_{i}|Du|^{2}}{v^{2}}g^{ik}+g^{ij}D_{i}D_{j}|Du|^{2}-2\left(\sigma^{ij}+\frac{u^{i}u^{j}}{v^{2}}\right)\sigma^{mk}u_{kj}u_{mi}\\
&&-2\left(\sigma^{ij}+\frac{u^{i}u^{j}}{v^{2}}\right)u^{m}u^{l}R_{limj}\\
&=&
\frac{D_{k}|Du|^{2}D_{i}|Du|^{2}}{v^{2}}g^{ik}+g^{ij}D_{i}D_{j}|Du|^{2}-|D^{2}u|^{2}-2\frac{u^{i}u_{im}u^{j}u_{jk}\sigma^{mk}}{v^{2}}\\
&&-2\sigma^{ij}u^{m}u^{l}R_{limj}\\
&=&
\frac{D_{k}|Du|^{2}D_{i}|Du|^{2}}{v^{2}}g^{ik}+g^{ij}D_{i}D_{j}|Du|^{2}-|D^{2}u|^{2}-|D|Du|^{2}|^{2}-2\sigma
^{ij}u^{m}u^{l}R_{limj}\\
&=&
\frac{D_{k}|Du|^{2}D_{i}|Du|^{2}}{v^{2}}g^{ik}+g^{ij}D_{i}D_{j}|Du|^{2}-|D^{2}u|^{2}-|D|Du|^{2}|^{2}-K|Du|^2,
\end{eqnarray*}
where $R_{limj}$, $1\leq l,i,m,j\leq2$, are the components of the
curvature tensor  on $M^{2}$.
\end{proof}

Then, we begin to estimate the gradient of $u$ by a clever use of the boundary
algebra.

\begin{theorem}\label{main2.3}
Under the assumptions Theorem \ref{main1.1}, there exists a positive
constant $c_{1}=c_{1}(u_{0},\kappa_{0},\phi_{0},\phi_{1},\phi_{2})$
such that
\begin{eqnarray*}
\sup\limits_{\overline{\Omega}\times[0,\mathbb{T}]}|Du|^{2}\leq
c_{1}<1.
\end{eqnarray*}
\end{theorem}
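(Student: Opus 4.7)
The plan is to apply the parabolic maximum principle directly to $|Du|^{2}$, using the evolution equation from Lemma~\ref{Evo-Du}. The nonnegativity of the Gaussian curvature enters at exactly one point: it makes the reaction term $-K|Du|^{2}$ nonpositive, so together with the non-positive quadratic terms $-|D^{2}u|^{2}-|D|Du|^{2}|^{2}$ we get
$$\partial_{t}|Du|^{2}\le g^{ij}D_{i}D_{j}|Du|^{2}+v^{-2}g^{ik}D_{k}|Du|^{2}\,D_{i}|Du|^{2}.$$
This is a linear parabolic inequality with coefficients controlled as long as $v$ stays bounded away from $0$, so the weak maximum principle forces $\sup_{\overline{\Omega}\times[0,\mathbb{T}]}|Du|^{2}$ to be attained on the parabolic boundary. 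At $t=0$ the bound is controlled by the space-like initial datum $u_{0}$, so the real work is to rule out a lateral boundary maximum arbitrarily close to $1$.

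Suppose for contradiction that the supremum is attained at $(\xi,\tau)\in\partial\Omega\times(0,\mathbb{T}]$. If $D_{T}u(\xi,\tau)=0$, then (\ref{2.1})--(\ref{2.2}) immediately yield $|Du|^{2}(\xi,\tau)=\phi^{2}/(1+\phi^{2})\le\phi_{1}^{2}/(1+\phi_{1}^{2})<1$, and we are done. Otherwise I compute $D_{N}|Du|^{2}$ in the orthonormal frame $\{N,T\}$, writing Hessian components as $H_{NN},H_{NT},H_{TT}$ and using Lemma~\ref{lemma2.1}(i) to translate between directional and Hessian derivatives (so $D_{T}D_{T}u=H_{TT}+\kappa D_{N}u$, etc.). Then
$$D_{N}|Du|^{2}=2(D_{N}u)H_{NN}+2(D_{T}u)H_{NT}.$$
At the maximum, $D_{T}|Du|^{2}=0$ forces $D_{T}v=0$; hence (\ref{2.5}) gives $H_{NT}=v D_{T}\phi+\kappa D_{T}u$, and (\ref{2.6}) gives $H_{TT}=-v^{2}\phi D_{T}\phi/D_{T}u-\kappa\phi v$. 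The unknown $H_{NN}$ is eliminated by the PDE
$$u_{t}=g^{NN}H_{NN}+2g^{NT}H_{NT}+g^{TT}H_{TT},$$
using (\ref{2.7})--(\ref{2.9}) and the bound $|u_{t}|\le\sqrt{c_{0}}$ from Lemma~\ref{lemma2.2}.

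Substituting and collecting all $\kappa$-contributions (using $(D_{T}u)^{2}=1-(1+\phi^{2})v^{2}$ from (\ref{2.2})) produces
$$D_{N}|Du|^{2}\;=\;\frac{2\kappa}{1+\phi^{2}}-2\kappa(1+\phi^{2})v^{2}+R(v,\phi,D_{T}\phi,\kappa,D_{T}u,u_{t}),$$
where every term of the remainder $R$ is $O(v)$ as $v\to 0$ with constants controlled only by $\phi_{0},\phi_{1},\phi_{2},\kappa_{0}$ and $c_{0}(u_{0})$. Because $\kappa\ge\kappa_{0}>0$ and $|\phi|\le\phi_{1}$, the leading term is bounded below by the strictly positive constant $2\kappa_{0}/(1+\phi_{1}^{2})$, which dominates $|R|$ whenever $v<v_{\ast}$ for an explicit threshold $v_{\ast}=v_{\ast}(u_{0},\kappa_{0},\phi_{0},\phi_{1},\phi_{2})>0$. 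This forces $D_{N}|Du|^{2}>0$ at $(\xi,\tau)$, contradicting the Hopf boundary point lemma (with $N$ the inward normal), and therefore $|Du|^{2}\le 1-v_{\ast}^{2}=:c_{1}<1$ on $\partial\Omega\times[0,\mathbb{T}]$.

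The principal obstacle is the algebraic elimination of $H_{NN}$: the coefficients $g^{NT}=\phi D_{T}u/v$ and $g^{TT}=(1-\phi^{2}v^{2})/v^{2}$ each carry a $1/v$ singularity, so the substitution $2D_{N}u\cdot H_{NN}=2\phi v H_{NN}$ initially produces several $O(1)$ terms; only after exact cancellation using the boundary identity $(D_{T}u)^{2}=1-(1+\phi^{2})v^{2}$ does the clean dominant contribution $2\kappa/(1+\phi^{2})$ emerge. The strict convexity $\kappa_{0}>0$ is essential for this step, whereas the nonnegativity of $K$ is invoked only in the interior reduction; together they explain why our hypothesis is weaker than that of \cite[Theorem~1.2]{aw}, whose Riemannian analogue needs additional control on $\phi$.
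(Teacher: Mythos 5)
Your argument follows the paper's proof almost step for step. You apply Lemma~\ref{Evo-Du} together with $K\ge 0$ to push the maximum of $|Du|^{2}$ to the parabolic boundary, handle $\Omega_{0}$ by the space-like assumption on $u_{0}$, and at a lateral boundary maximum you use $D_{T}|Du|^{2}=0$ (hence $D_{T}v=0$), simplify (\ref{2.4})--(\ref{2.6}), eliminate $D_{N}D_{N}u$ through the PDE written in the $\{N,T\}$ frame with (\ref{2.7})--(\ref{2.9}), and invoke Lemma~\ref{lemma2.2} to control the $u_{t}$-term. The cancellation you highlight is exactly the identity the paper uses, reducing the $\kappa$-contribution to $\kappa(1-v^{2})$, and strict convexity $\kappa\ge\kappa_{0}>0$ is where that term becomes decisive. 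The only meaningful differences are presentational. First, you split cases by $D_{T}u=0$ versus $D_{T}u\ne 0$; the paper uses $|D_{T}u|\le\tfrac12$ versus $|D_{T}u|>\tfrac12$, which has the small advantage that in the second case the factor $1/D_{T}u$ is bounded a priori, rather than only after restricting to the small-$v$ regime. Second, you close via a Hopf-lemma contradiction at a threshold $v_{\ast}$, whereas the paper uses only the weak inequality $D_{N}|Du|^{2}\le 0$ and turns $\kappa_{0}(1-v^{2})\le c_{2}v$ into the explicit quadratic bound $c_{1}=\bigl(\sqrt{c_{2}^{4}+4c_{2}^{2}\kappa_{0}^{2}}-c_{2}^{2}\bigr)/(2\kappa_{0}^{2})$; the strict Hopf lemma is not actually needed. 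Finally, your displayed $\kappa$-expansion has a minor slip in the coefficient of $v^{2}$: collecting the $\kappa$-terms gives $\tfrac{2\kappa(1-v^{2})}{1+\phi^{2}}$, so the correction is $-\tfrac{2\kappa v^{2}}{1+\phi^{2}}$, not $-2\kappa(1+\phi^{2})v^{2}$. Keeping the factored form $\tfrac{2\kappa(1-v^{2})}{1+\phi^{2}}$ is cleaner because it is manifestly nonnegative and makes it clear that the bound $c_{1}$ depends only on $\kappa_{0}$ (a lower bound for $\kappa$) and not on any upper bound for $\kappa$, which your decomposition would superficially suggest is needed to control the remainder. None of these affect the validity of the proof; the structure and key idea match the paper's.
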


\begin{proof}
We first show that the maximum of $|Du|^{2}$ must occur on
$\left(\partial\Omega\times[0,\mathbb{T}]\right)\cup \Omega_{0}$.
Applying Lemma \ref{Evo-Du}, we can get the following estimate
\begin{eqnarray*}
\frac{\partial}{\partial
t}|Du|^{2}\leq\frac{D_{k}|Du|^{2}D_{i}|Du|^{2}}{v^{2}}g^{ik}+g^{ij}D_{i}D_{j}|Du|^{2},
\end{eqnarray*}
since $M^{2}$ has nonnegative Gaussian curvature.
Applying the weak maximum principle to the above evolution
inequality, we have
 \begin{eqnarray*}
\sup\limits_{\overline{\Omega}\times[0,\mathbb{T}]}|Du|^{2}=\sup\limits_{\left(\partial\Omega\times[0,\mathbb{T}]\right)\cup
\Omega_{0}}|Du|^{2}.
 \end{eqnarray*}
If the maximum of $|Du|^{2}$ occurs at $\Omega_{0}$, then
$\sup\limits_{\overline{\Omega}\times[0,\mathbb{T}]}|Du|^{2}\leq
\sup\limits_{\Omega_{0}}|Du|^{2}<1$, where the last inequality holds
since $\{(x,u(x,0))|x\in M^{2}\}$ is a space-like graph of
$M^{2}\times\mathbb{R}$. Now, assume that the maximum of $|Du|^{2}$
occurs at $(\xi,\tau)\in\partial\Omega\times[0,\mathbb{T}]$. We
divide the argument into two cases:

Case (1). If $|D_{T}u|(\xi,\tau)\leq\frac{1}{2}$,
 then applying the fact
$(1+\phi^{2})v^{2}=1-|D_{T}u|^{2}$, we have
\begin{eqnarray*}
|Du|^{2}(\xi,\tau)\leq1-\frac{3}{4(1+\phi^{2})}<1,
\end{eqnarray*}
which establishes an upper bound for $|Du|^{2}$ on
$\overline{\Omega}\times[0,\mathbb{T}]$ already.

Case (2). If $|D_{T}u|(\xi,\tau)>\frac{1}{2}$, then at $(\xi,\tau)$,
one has
$$D_{N}|Du|^{2}(\xi,\tau)\leq0,$$
$$D_{T}|Du|^{2}(\xi,\tau)=0=D_{T}v(\xi,\tau).$$
Therefore, at $(\xi,\tau)$, (\ref{2.4})-(\ref{2.6}) can be
simplified as follows
\begin{equation} \label{2.10}
D_{T}D_{N}u=v D_{T}\phi,
\end{equation}
\begin{equation} \label{2.11}
D_{N}D_{T}u=v D_{T}\phi+\kappa D_{T}u,
\end{equation}
\begin{equation} \label{2.12}
D_{T}D_{T}(u)=\frac{-v^{2}\phi D_{T}\phi}{D_{T}u}.
\end{equation}
Our target is to show the following
\begin{equation} \label{2.13}
(D_{N}u)(D_{N}D_{N}u(\xi,\tau))+(D_{T}u)(D_{N}D_{T}U)(\xi,\tau)\leq0.
\end{equation}
In order to get this, we need to consider the evolution equation of
$u$. In fact, by using the assumption that $|Du|^{2}$ gets its
maximum at $(\xi,\tau)$, (\ref{2.1})-(\ref{2.3}), and
(\ref{2.10})-(\ref{2.12}), we get that at $(\xi,\tau)$, the
following identity
\begin{eqnarray*}
u_{t}&=&g^{TT}D_{T}D_{T}u+g^{TN}D_{T}D_{N}u+g^{NT}D_{N}D_{T}u+g^{NN}D_{N}D_{N}u\\
&&-
g^{TT}\langle D_{T}T,Du\rangle-g^{TN}\langle D_{T}N,Du\rangle-g^{NT}\langle D_{N}T,Du\rangle-g^{NN}\langle D_{N}N,Du\rangle\\
&=&
\frac{1- |D_{N}u|^{2}}{v^{2}}\cdot\frac{\left(-v D_{T}v(1+\phi^{2})-v^{2}\phi D_{T}\phi\right)}{D_{T}u}+\frac{D_{N}uD_{T}u}{v^{2}}(D_{T}\phi\cdot v+\phi\cdot D_{T}v)\\
&&+\frac{D_{N}uD_{T}u}{v^{2}}(D_{T}\phi\cdot v+\phi\cdot
D_{T}v+\kappa D_{T}u)+(1+\phi^{2})D_{N}D_{N}u-\frac{1-|D_{N}u|^{2}}{v^{2}}\langle \kappa N,Du\rangle\\
&&-\frac{D_{N}uD_{T}u}{v^{2}}\langle -\kappa T,Du\rangle\\
&=& \frac{1-\phi^{2}v^{2}}{v^{2}}\left[\frac{-v\cdot
D_{T}v(1+\phi^{2})-v^{2}\phi
D_{T}\phi}{D_{T}u}-\kappa\phi v \right]+(1+\phi^{2})D_{N}D_{N}u
\\&&+ 2\frac{\phi v D_{T}u}{v^{2}}(D_{T}\phi\cdot v+\phi D_{T}v+\kappa\cdot D_{T}u)\\
&=&
\frac{1-\phi^{2}v^{2}}{v^{2}}\left(\frac{-v^{2}\phi
D_{T}\phi}{D_{T}u}-\kappa\phi v\right)+(1+\phi^{2})D_{N}D_{N}u
+2\frac{\phi v
D_{T}u}{v^{2}}(D_{T}\phi\cdot v+\kappa\cdot D_{T}u)
\end{eqnarray*}
holds. That is,
\begin{eqnarray*}
(1+\phi^{2})D_{N}D_{N}u&=&u_{t}-\frac{1-\phi^{2}v^{2}}{v^{2}}\left(\frac{-v^{2}\phi
D_{T}\phi}{D_{T}u}-\kappa\phi v\right)-2\frac{\phi v
D_{T}u}{v^{2}}(D_{T}\phi \cdot v+\kappa\cdot D_{T}u)\\
&=& u_{t}+\frac{\phi
D_{T}\phi}{D_{T}u}\left(1-\phi^{2}
v^{2}-2|D_{T}u|^{2}\right)+\frac{(1-\phi^{2}v^{2})\kappa\phi}{v}-\frac{2\kappa\phi|D_{T}u|^{2}}{v}\\
&=& u_{t}+\frac{\phi
D_{T}\phi}{D_{T}u}\left(v^{2}-|D_{T}u|^{2}\right)-\frac{2\kappa\phi
|D_{T}u|^{2}}{v}+\frac{\kappa\phi(1-\phi^{2}v^{2})}{v}.
\end{eqnarray*}
Substituting the above identity into (\ref{2.13}), together with
(\ref{2.11}), yields
\begin{eqnarray*}
\phi v \left[u_{t}+\frac{\phi
D_{T}\phi}{D_{T}}(v^{2}-|D_{T}u|^{2})-\frac{2\kappa\phi
u_{T}^{2}}{v}+\frac{\kappa\phi(1-\phi^{2}v^{2})}{v}\right]+(1+\phi^{2})D_{T}u(v
D_{T}\phi+\kappa u_{T})\leq0,
\end{eqnarray*}
which is equivalent with
\begin{eqnarray} \label{2.14}
\phi v u_{t}+\frac{\phi^{2}v
D_{T}\phi}{D_{T}u}\left(v^{2}-|D_{T}u|^{2}\right)-2\kappa\phi^{2}|D_{T}u|^{2}+\kappa\phi^{2}(1-\phi^{2}v^{2})+\nonumber\\
(1+\phi^{2})D_{T}u(v D_{T}\phi+\kappa D_{T}u)\leq 0.
\qquad\qquad
\end{eqnarray}
It is easy to verify that
\begin{eqnarray*}
-2\kappa\phi^{2}|D_{T}u|^{2}+\kappa\phi^{2}(1-\phi^{2}v^{2})+(1+\phi^{2})\kappa
|D_{T}u|^{2}=\kappa(1-v^{2}),
\end{eqnarray*}
and
\begin{eqnarray*}
\frac{\phi^{2}v
D_{T}\phi}{D_{T}u}(v^{2}-|D_{T}u|^{2})+(1+\phi^{2})D_{T}uD_{T}\phi v=\frac{v
D_{T}\phi}{D_{T}u}(1-v^{2}).
\end{eqnarray*}
Using the above two identities, (\ref{2.14}) can be simplified as
follows
\begin{eqnarray*}
\phi v u_{t}+\kappa(1-v^{2})+\frac{v
D_{T}\phi}{D_{T}u}(1-v^{2})\leq 0.
\end{eqnarray*}
Note that
\begin{eqnarray*}
1-v^2=\frac{\phi^2}{1+\phi^2}+\frac{|D_{T}u|^2}{1+\phi^2}.
\end{eqnarray*}
Hence,
\begin{eqnarray*}
\phi v u_{t}+\kappa(1-v^{2})+\frac{v
D_{T}\phi}{D_{T}u}\frac{\phi^2}{1+\phi^2}+\frac{vD_T\phi D_T u}{1+\phi^2}\leq 0.
\end{eqnarray*}
In view of Lemma \ref{lemma2.2}, $|D_{T}u|(\xi,\tau)>\frac{1}{2}$
and $\Omega$ is strictly convex, we can find $\kappa_0$ such that
\begin{eqnarray*}
0<\kappa_0(1-v^2)\leq \kappa(1-v^2)\leq c_{2}v,
\end{eqnarray*}
where $c_{2}$ is a positive constant depending on $c_{0}$,
$\phi_{0}$, $\phi_{1}$, $\phi_{2}$. Therefore£¬
$$|Du|^{2}\leq c_{1}:=\frac{\sqrt{c_{2}^{4}+4c_{2}^{2}\kappa_{0}^{2}}-c_{2}^{2}}{2\kappa_{0}^{2}}<1.$$
Our proof is finished.
\end{proof}

\subsection{Boundary value problems}
\

Applying the above gradient estimate, Theorem \ref{main2.3}, one can
solve the following boundary value problem (BVP for short)
\begin{eqnarray*}
 (\ast)\qquad\left\{
\begin{array}{lll}
\left(\sigma^{ij}+\frac{D^{i}uD^{j}u}{1-|Du|^{2}}\right)D_{i}D_{j}u=c_{3}  \qquad \qquad &{\rm{on}} \quad \Omega\\
D_{N}u=\phi(x)v \qquad \qquad &{\rm{on}} \quad \partial\Omega,
\end{array}
\right.
\end{eqnarray*}
where $c_{3}$ is a constant determined uniquely by (\ref{2.15})
below. Clearly, the BVP $(\ast)$ can be seen as the elliptic version
of IVP $(\sharp)$.

In fact, since the LHS of the first equation in BVP $(\ast)$ can be
written as
$$\sqrt{1-|Du|^{2}}D_{i}\left(\frac{D_{i}u}{\sqrt{1-|Du|^{2}}}\right),$$
 integrating by parts one can easily get
\begin{equation} \label{2.15}
c_{3}=-\frac{\int_{\partial \Omega}\phi
}{\int_{\Omega}(1-|Du|^{2})^{-\frac{1}{2}}},
\end{equation}
where, for convenience, we have dropped volume elements of the
domain $\Omega$ and its boundary $\partial\Omega$ simultaneously.

One method for solving BVP $(\ast)$ is to consider the solvability
the following BVP.

\begin{eqnarray*}
(\ast\ast)\qquad \left\{
\begin{array}{lll}
\left(\sigma^{ij}+\frac{D^{i}u_{\varepsilon}D^{j}u_{\varepsilon}}{1-|Du_{\varepsilon}|^{2}}\right)D_{i}D_{j}u_{\varepsilon}=\varepsilon
u_{\varepsilon} \qquad \qquad &{\rm{on}}\quad \Omega\\
D_{N}u_{\varepsilon}=\phi(x)\sqrt{1-|Du_{\varepsilon}|^{2}} \qquad
\qquad &{\rm{on}}\quad \partial\Omega.
\end{array}
\right.
\end{eqnarray*}

\begin{theorem}
The BVP $(\ast)$ has a unique, smooth solution.
\end{theorem}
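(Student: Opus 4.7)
The plan is to solve $(\ast)$ through a singular perturbation from the lower-order regularized problem $(\ast\ast)$: one solves $(\ast\ast)$ for each $\varepsilon > 0$ and then passes to the limit $\varepsilon \to 0^+$. The gain is that the term $\varepsilon u_\varepsilon$ acts as a coercive zero-order perturbation and decouples the PDE from the integral identity (\ref{2.15}), allowing a direct application of standard fixed-point theory.

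For the solvability of $(\ast\ast)$ I would apply Leray--Schauder, which reduces to uniform-in-$\varepsilon$ a priori estimates for any classical solution $u_\varepsilon$. First, an $L^\infty$ bound for $\varepsilon u_\varepsilon$: at an interior extremum $D^2 u_\varepsilon$ has a definite sign, forcing $\varepsilon u_\varepsilon$ to have the opposite sign there, while boundary extrema are handled through the Hopf lemma and the Neumann condition $D_N u_\varepsilon = \phi v$ exactly as in the proof of Lemma \ref{lemma2.2}, yielding $\|\varepsilon u_\varepsilon\|_\infty \leq C(\phi)$. Next, the gradient bound $|Du_\varepsilon|^2 \leq c_1 < 1$: the proof of Theorem \ref{main2.3} is essentially elliptic, with $u_t$ appearing only through the boundary identity $u_t = g^{ij}D_iD_ju$ evaluated at the maximum of $|Du|^2$; replacing $u_t$ by $\varepsilon u_\varepsilon$ and using the previous step to bound its contribution reproduces both Case (1) and Case (2), the strict convexity of $\partial\Omega$ supplying the $\kappa_0>0$ needed to close the argument. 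Finally, with uniform ellipticity in hand, Krylov--Safonov together with Schauder theory yield $C^{2,\alpha}$ bounds and smoothness by bootstrap, completing the Leray--Schauder input.

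To pass to the limit, fix a point $x_0 \in \overline{\Omega}$ and normalize $\tilde u_\varepsilon := u_\varepsilon - u_\varepsilon(x_0)$. The gradient estimate gives a uniform $C^{2,\alpha}$ bound for $\tilde u_\varepsilon$. Rewriting $(\ast\ast)$ in divergence form,
$$
D_i\left(\frac{D^i u_\varepsilon}{\sqrt{1-|Du_\varepsilon|^{2}}}\right) = \frac{\varepsilon u_\varepsilon}{\sqrt{1-|Du_\varepsilon|^{2}}},
$$
integrating over $\Omega$ and using the Neumann condition gives
$$
\varepsilon u_\varepsilon(x_0) \int_\Omega \frac{1}{\sqrt{1-|Du_\varepsilon|^{2}}} + \varepsilon \int_\Omega \frac{\tilde u_\varepsilon}{\sqrt{1-|Du_\varepsilon|^{2}}} = -\int_{\partial\Omega}\phi.
$$
The second term is $O(\varepsilon)$, so $\varepsilon u_\varepsilon(x_0) \to c_3$ with $c_3$ precisely the constant in (\ref{2.15}). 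Extracting a $C^2$-convergent subsequence of $\{\tilde u_\varepsilon\}$ produces a smooth solution of $(\ast)$.

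Uniqueness up to an additive constant (as in Theorem \ref{main1.1}): if $u_1, u_2$ both solve $(\ast)$ with constants $c_3^{(1)}, c_3^{(2)}$, then $w := u_1 - u_2$ satisfies a linear uniformly elliptic equation with right-hand side $c_3^{(1)} - c_3^{(2)}$ and an oblique-derivative boundary condition obtained by linearizing $D_N u = \phi\sqrt{1-|Du|^{2}}$; integrating against a positive weight forces $c_3^{(1)} = c_3^{(2)}$, after which the strong maximum principle and Hopf lemma force $w$ to be constant. The main obstacle is the transfer of the parabolic gradient estimate of Theorem \ref{main2.3} to the elliptic setting uniformly in $\varepsilon$: one must verify that the new boundary contribution $\phi v \cdot \varepsilon u_\varepsilon$ is controlled by the strict convexity term $\kappa_0(1-v^2)$, which in turn depends crucially on the $L^\infty$ bound for $\varepsilon u_\varepsilon$ established in the first step.
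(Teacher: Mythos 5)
Your overall architecture — regularize to $(\ast\ast)$, extract uniform estimates, pass to the limit $\varepsilon\to 0$, and then argue uniqueness — is exactly the paper's route, and the gradient-estimate transfer, the divergence-form integration identifying $c_3$, and the compactness step are all sound. However, there are two substantive gaps.

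The serious one is your $L^\infty$ bound for $\varepsilon u_\varepsilon$. At an interior maximum of $u_\varepsilon$ the ellipticity gives $\varepsilon u_\varepsilon\leq 0$, and at an interior minimum $\varepsilon u_\varepsilon\geq 0$; this is \emph{sign} information at a single point, not a two-sided bound, and it tells you nothing when the extremum sits on $\partial\Omega$. Your appeal to ``the Neumann condition exactly as in the proof of Lemma \ref{lemma2.2}'' does not transfer: in Lemma \ref{lemma2.2} the boundary identity $D_N u_t=-\phi\,Du\cdot Du_t/v$ is obtained by differentiating the Neumann condition \emph{in time}, which produces a relation among derivatives of $u_t$ and forces $D_N u_t=0$ at a max of $|u_t|^2$. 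There is no analogous differentiated relation for $u_\varepsilon$ itself, and the raw condition $D_N u_\varepsilon=\phi v$ has no fixed sign (since $\phi$ need not), so it cannot exclude a boundary extremum. The paper closes this gap with a barrier: one constructs $\psi$ (e.g.\ $\psi=Ad$ near $\partial\Omega$ with $A/\sqrt{1-A^2}<\phi$, $d$ the boundary distance) satisfying the strict boundary inequality $D_N\psi<\phi\sqrt{1-|D\psi|^2}$, shows via the monotonicity of $q\mapsto q/\sqrt{1-b^2-q^2}$ that $\psi-u_\varepsilon$ cannot have a boundary minimum, and then uses the interior minimum of $\psi-u_\varepsilon$ plus the PDE to bound $\varepsilon u_\varepsilon$ from above (a mirror barrier gives the lower bound). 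Without this, the Leray--Schauder input and the limit $\varepsilon u_\varepsilon\to c_3$ are unsupported.

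The second gap is in uniqueness. You claim that ``integrating against a positive weight forces $c_3^{(1)}=c_3^{(2)}$,'' but this does not follow from the natural computation: writing the equation as $D_i\bigl(D^iu/\sqrt{1-|Du|^2}\bigr)=c_3/v$ and integrating the difference gives only
\begin{equation*}
\int_\Omega\Bigl(\frac{c_3^{(1)}}{v_1}-\frac{c_3^{(2)}}{v_2}\Bigr)=0,
\end{equation*}
which is not $c_3^{(1)}=c_3^{(2)}$ because $v_1\neq v_2$ a priori. (Constructing a positive adjoint solution to kill the boundary terms and conclude would itself require a separate lemma.) The paper's argument is more elementary and avoids this circularity: assume $c_5<c_6$, normalize so $\mathcal U=u_1-u_2\geq 0$, note $\mathcal L\mathcal U<0$ so the minimum of $\mathcal U$ lies on $\partial\Omega$, use $D_T\mathcal U=0$ there together with strict monotonicity of $q\mapsto q/\sqrt{1-a^2-q^2}$ to deduce $D_N\mathcal U=0$ at the boundary minimum, and then contradict the Hopf lemma. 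You should replace your integration step with this comparison-plus-Hopf argument (which also yields $u_1=u_2$ up to a constant once $c_5=c_6$).
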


\begin{proof}
We will use an argument similar to those in \cite{aw,ltu}. For BVP
$(\ast\ast)$, it is known that it has solutions for $\varepsilon>0$.
Therefore, one can replace $u_{t}$ with $\varepsilon
u_{\varepsilon}$ in the gradient estimate of Theorem \ref{main2.3}
and get a conclusion that a limit solution to $(\ast\ast)$ exists as
$\varepsilon\rightarrow0$, provided there exists some $c_{0}$,
independent of $\varepsilon$, such that $|\varepsilon
u_{\varepsilon}|^{2}\leq c_{0}$.

Let $\psi$ be a smooth function defined on $\Omega$ satisfying
$D_{N}\psi<\phi\sqrt{1-|D\psi|^{2}}$ on $\partial\Omega$. This kind
of smooth functions can always be constructed. For instance, let $d$
be the distance function to $\partial\Omega$ and $A$ be a constant
such that $\frac{A}{\sqrt{1-A^{2}}}<\phi$ on $\partial\Omega$. It is
easy to check that a function $\psi$ defined to be $Ad$ near
$\partial\Omega$ and extended to be a smooth function on all of
$\Omega$ would satisfy the requirements that $\psi\in
C^{\infty}(\partial\Omega)$, $D_{N}\psi<\phi\sqrt{1-|D\psi|^{2}}$.
Assume that $\psi-u_{\varepsilon}$ attains its minimum at some point
$\xi\in\Omega$.

If $\xi\in\partial\Omega$, then
$D_{T}\psi(\xi)=D_{T}u_{\varepsilon}(\xi)$ and $D_{N}\psi(\xi)\geq
D_{N}u_{\varepsilon}(\xi)$. One can get
\begin{eqnarray*}
\phi(\xi)>\frac{D_{N}\psi}{\sqrt{1-|D_{T}\psi|^{2}-|D_{N}\psi|^{2}}}(\xi)\geq\frac{D_{N}u_{\varepsilon}}{\sqrt{1-|D_{T}u_{\varepsilon}|^{2}-|D_{N}u_{\varepsilon}|^{2}}}(\xi)=\phi(\xi),
\end{eqnarray*}
since the function $\frac{q}{\sqrt{1-b^{2}-q^{2}}}$ with $b$ a fixed
constant is monotone nondecreasing in $q$. This is a contradiction.

Therefore, $\xi\in\Omega$, $D\psi(\xi)=Du_{\varepsilon}(\xi)$ and
$D^{2}\psi(\xi)\geq D^{2}u_{\varepsilon}(\xi)$. There exists a
constant $c_{4}=c_{4}(\psi)$ such that
\begin{eqnarray*}
c_{4}\geq\left(\sigma^{ij}+\frac{D^{i}\psi(\xi)
D^{j}\psi(\xi)}{1-|D\psi(\xi)|^{2}}\right)D_{i}D_{j}\psi(\xi)\geq
\left(\sigma^{ij}+\frac{D^{i}u_{\varepsilon}(\xi)D^{j}u_{\varepsilon}(\xi)}{1-|Du_{\varepsilon}(\xi)|^{2}}\right)D_{i}D_{j}u_{\varepsilon}(\xi)=\varepsilon
u_{\varepsilon}(\xi).
\end{eqnarray*}
Together with the fact that $\varepsilon\psi(z)-\varepsilon
u_{\varepsilon}(z)\geq\varepsilon\psi(\xi)-\varepsilon
u_{\varepsilon}(\xi)$ for any $z\in\Omega$, we have
\begin{eqnarray*}
\varepsilon
u_{\varepsilon}(z)\leq\varepsilon\psi(z)-\varepsilon\psi(\xi)+\varepsilon
u_{\varepsilon}(\xi)\leq\varepsilon\psi(z)-\varepsilon\psi(\xi)+c_{4}
\end{eqnarray*}
for any $z\in\Omega$. By a similar barrier argument, one can get a
lower bound for $\varepsilon u_{\varepsilon}$. As in \cite{ltu},
$|Du_{\varepsilon}|^{2}\leq c_{1}$ implies $|D(\varepsilon
u_{\varepsilon})|^{2}\rightarrow 0$ as $\varepsilon\rightarrow 0$,
and then we have $\varepsilon u_{\varepsilon}\rightarrow c_{3}$.
This gives the existence of solutions to BVP $(\ast)$.

Now, in what follows, we would like to show the uniqueness of the
solutions. Assume that the BVP $(\ast)$ has two solutions $u_{1}$,
$u_{2}$ with constants $c_{5}$, $c_{6}$ on the RHS of $(\ast)$ and
$c_{5}<c_{6}$. Without loss of generality, assume $u_{1}\geq u_{2}$.
By the linearization process, one easily knows that
$\mathcal{U}:=u_{1}-u_{2}$ satisfies a linear elliptic differential
inequality $\mathcal{L}(\mathcal{U})<0$. By the maximum principle,
the minimum of $\mathcal{U}$ must be achieved at some point
$\zeta\in\partial\Omega$, which implies that
$|D_{T}u_{1}|^{2}(\zeta)=|D_{T}u_{2}|^{2}(\zeta)=a^{2}$ for some
$a\in\mathbb{R}^{+}$. Since
\begin{eqnarray*}
\frac{D_{N}u_{1}}{\sqrt{1-a^{2}-|D_{N}u_{1}|^{2}}}(\zeta)=\frac{D_{N}u_{2}}{\sqrt{1-a^{2}-|D_{N}u_{2}|^{2}}}(\zeta),
\end{eqnarray*}
it follows that $D_{N}u_{1}(\zeta)=D_{N}u_{2}(\zeta)$ at
$\zeta\in\partial\Omega$ by using the fact that the function
$\frac{q}{\sqrt{1-a^{2}-q^{2}}}$ is monotone nondecreasing in $q$.
However, this is contradict with the Hopf boundary point lemma. So,
$c_{5}\geq c_{6}$. Reversing the roles of $c_{5}$ and $c_{6}$, one
has $c_{5}\leq c_{6}$. Therefore, one can get $c_{5}=c_{6}$. By a
similar argument, one can also obtain $u_{1}=u_{2}$. This gives the
uniqueness of solutions to BVP $(\ast)$.

Our proof is finished.
\end{proof}

\begin{remark} \label{remark2.1}
\rm{Clearly, if $u=u(x)$ is a solution to the BVP $(\ast)$, then
$\widetilde{u}(x,t)=u(x)+c_{3}t$ is a solution to the IVP
$(\sharp)$. That is to say $\widetilde{u}$ is a translating solution
with constant speed $|c_{3}|$.}
\end{remark}

\section{Convergence}

Now, we can show the following uniqueness conclusion of limit
solutions to the IVP $(\sharp)$ (up to translation) by applying the
strong maximum principle of the second-order linear parabolic PDE.

\begin{lemma} \label{lemma3.1}
Let $\widetilde{u}_{1}$ and  $\widetilde{u}_{2}$ be any two solution
to IVP $(\sharp)$ and let
$\widetilde{\mathcal{U}}=\widetilde{u}_{1}-\widetilde{u}_{2}$. Then
$\widetilde{\mathcal{U}}$ becomes a constant function as
$t\rightarrow\infty$. In particular, if $u$ is a solution to the BVP
$(\ast)$, then all limit solutions to the IVP $(\sharp)$ are of the
form $u+c_{3}t$.
\end{lemma}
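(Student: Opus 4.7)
\textbf{Proof plan for Lemma \ref{lemma3.1}.} The plan is to show that $\widetilde{\mathcal{U}}:=\widetilde{u}_1-\widetilde{u}_2$ solves a zero-order-free, uniformly parabolic linear PDE with a strictly oblique homogeneous boundary condition, then extract a $C^2_{\mathrm{loc}}$ limit as $t\to\infty$ and apply the parabolic strong maximum principle together with Hopf's lemma. First I would subtract the two evolution equations of $(\sharp)$ and linearize the operator $g^{ij}(Du)D_iD_ju$ along the segment joining $D\widetilde{u}_2$ to $D\widetilde{u}_1$, obtaining
\begin{equation*}
\partial_t\widetilde{\mathcal{U}}=a^{ij}(x,t)\,D_iD_j\widetilde{\mathcal{U}}+b^k(x,t)\,D_k\widetilde{\mathcal{U}},
\end{equation*}
with no zero-order term (so constants solve it). The gradient bound $|D\widetilde{u}_\ell|^2\le c_1<1$ of Theorem \ref{main2.3} makes this equation uniformly parabolic on $\overline{\Omega}\times[0,\infty)$. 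Similarly, writing $D_N\widetilde{u}_\ell=\phi v_\ell$ and using the telescoping identity $v_1-v_2=-\langle D\widetilde{u}_1+D\widetilde{u}_2,D\widetilde{\mathcal{U}}\rangle_\sigma/(v_1+v_2)$ produces the linear boundary condition
\begin{equation*}
D_N\widetilde{\mathcal{U}}+\gamma^kD_k\widetilde{\mathcal{U}}=0 \quad \text{on } \partial\Omega,\qquad \gamma^k:=\frac{\phi(D^k\widetilde{u}_1+D^k\widetilde{u}_2)}{v_1+v_2},
\end{equation*}
and the boundary condition $D_N\widetilde{u}_\ell=\phi v_\ell$ yields $\gamma^kN_k=\phi^2$, so in the local frame $\{N,T\}$ the condition reads $(1+\phi^2)D_N\widetilde{\mathcal{U}}+\gamma^TD_T\widetilde{\mathcal{U}}=0$, which is strictly oblique.

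Once this linear framework is in place, I would set $M(t):=\max_{\overline{\Omega}}\widetilde{\mathcal{U}}(\cdot,t)$ and $m(t):=\min_{\overline{\Omega}}\widetilde{\mathcal{U}}(\cdot,t)$. The weak maximum principle for a zero-order-free linear uniformly parabolic equation with strictly oblique boundary data forces $M(t)$ non-increasing and $m(t)$ non-decreasing, so the limits $M_\infty=\lim_{t\to\infty} M(t)\ge m_\infty=\lim_{t\to\infty} m(t)$ exist, and it suffices to prove $M_\infty=m_\infty$. Arguing by contradiction, if $M_\infty>m_\infty$, I would invoke standard interior and boundary parabolic Schauder theory applied to $(\sharp)$, combined with Lemma \ref{lemma2.2} and Theorem \ref{main2.3}, to produce uniform-in-$s$ $C^{k,\alpha}$ estimates for $\widetilde{u}_\ell(\cdot,\cdot+s)-\widetilde{u}_\ell(x_0,s)$ on $\overline{\Omega}\times[0,1]$; the subtraction is necessary because $\widetilde{u}_\ell$ itself may grow linearly in $t$. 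Picking $t_k\to\infty$ and $x_k\in\overline{\Omega}$ with $\widetilde{\mathcal{U}}(x_k,t_k)=M(t_k)$, an Arzel\`a--Ascoli argument then yields a subsequential $C^2_{\mathrm{loc}}(\overline{\Omega}\times\mathbb{R})$ limit $\widetilde{\mathcal{U}}^\infty$ solving the same linear PDE and oblique condition, with $m_\infty\le\widetilde{\mathcal{U}}^\infty\le M_\infty$ and $\widetilde{\mathcal{U}}^\infty(x_\infty,0)=M_\infty$ at $x_\infty:=\lim x_k\in\overline{\Omega}$.

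To conclude I would apply the strong maximum principle and Hopf argument to $w:=M_\infty-\widetilde{\mathcal{U}}^\infty\ge 0$ on $\overline{\Omega}\times(-\infty,0]$. If $x_\infty\in\Omega$, the parabolic strong maximum principle forces $w\equiv 0$, giving $m_\infty=M_\infty$. If $x_\infty\in\partial\Omega$, Hopf's boundary point lemma yields $D_Nw(x_\infty,0)>0$, i.e.\ $D_N\widetilde{\mathcal{U}}^\infty(x_\infty,0)<0$, while the oblique condition, together with the tangential maximum condition $D_T\widetilde{\mathcal{U}}^\infty(x_\infty,0)=0$, forces $(1+\phi^2)D_N\widetilde{\mathcal{U}}^\infty(x_\infty,0)=0$, a contradiction. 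Hence $M_\infty=m_\infty$ and $\widetilde{\mathcal{U}}(\cdot,t)$ converges uniformly to the constant value $M_\infty$. For the ``in particular'' statement, taking $\widetilde{u}_2=u+c_3t$, which solves $(\sharp)$ by Remark \ref{remark2.1}, shows any solution $\widetilde{u}_1$ of $(\sharp)$ satisfies $\widetilde{u}_1-u-c_3t\to k$ for some constant $k$, so every limit solution has the claimed form $(u+k)+c_3t$. The main obstacle will be the compactness step: securing a limit $\widetilde{\mathcal{U}}^\infty$ at which the supremum is actually attained requires time-independent higher-order estimates (not just the $C^1$ bound of Theorem \ref{main2.3}), and the Hopf argument must be adapted to the oblique condition rather than a pure Neumann condition.
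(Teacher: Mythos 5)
Your argument follows the same route as the paper's: linearize the equation and the boundary condition, use the maximum principle to control the oscillation of $\widetilde{\mathcal{U}}$, pass to a limiting solution on a doubly-infinite time slab, and use the parabolic strong maximum principle together with a Hopf lemma on that limit to force the oscillation to vanish; the ``in particular'' clause is handled by comparison with $u+c_{3}t$ in both. The only reorganizations are cosmetic: the paper keeps a single monotone quantity $\mathrm{osc}(t)$ and proves a separate $L^{\infty}$ bound on $\widetilde{\mathcal{U}}$ by locating its extrema on $\Omega_{0}$, while you split this into $M(t)$ nonincreasing and $m(t)$ nondecreasing (which encodes the same boundedness); and the paper phrases the final contradiction as ``$\mathrm{osc}$ would be constant along the limit, violating strict monotonicity,'' whereas you apply the strong maximum principle and Hopf lemma directly to $w=M_{\infty}-\widetilde{\mathcal{U}}^{\infty}$ --- which is the same argument made explicit, since ``osc constant implies constant'' is exactly a strong maximum principle statement. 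You are also more careful than the paper on two technical points it treats briefly: the obliqueness structure of the linearized boundary condition (your computation $\gamma^{k}N_{k}=\phi^{2}$, giving the coefficient $1+\phi^{2}>0$ on the normal derivative), and the compactness step. Your closing observation that securing a smooth limiting solution $\widetilde{\mathcal{U}}^{\infty}$ requires time-independent interior and boundary Schauder estimates --- not just the $C^{1}$ control of Theorem \ref{main2.3} --- is exactly right: the paper's phrase ``take a sequence $\{t_{n}\}$ such that the limit exists'' implicitly presupposes this higher regularity, since the strong maximum principle must be applied to a genuine $C^{2}$ solution of the limiting linear parabolic problem.
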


\begin{proof}
By the linearization process, one can easily get that
$\widetilde{\mathcal{U}}$ satisfies the following linear parabolic
equation
\begin{eqnarray*}
\frac{\partial}{\partial
t}\widetilde{\mathcal{U}}=\widetilde{g}^{ij}D_{i}D_{j}\widetilde{\mathcal{U}}+\widetilde{b}^{i}D_{i}\widetilde{\mathcal{U}},
\qquad \rm{on} \quad \Omega\times[0,T]
\end{eqnarray*}
with the boundary condition
\begin{eqnarray*}
0=\left\langle\frac{Du_{1}}{\sqrt{1-|Du_{1}|^{2}}}-\frac{Du_{2}}{\sqrt{1-|Du_{2}|^{2}}},N\right\rangle:=\widetilde{c}^{ij}N_{j}D_{i}\widetilde{\mathcal{U}},
\qquad \rm{on} \quad \partial\Omega\times[0,T],
\end{eqnarray*}
where
 \begin{eqnarray*}
  \widetilde{g}^{ij}=\int^{1}_{0}g^{ij}\left(\theta
  Du_{1}+(1-\theta)Du_{2}\right)d\theta,
\end{eqnarray*}
and $\widetilde{b}^{i}$, $\widetilde{c}^{ij}$ are similarly
determined (see, e.g., \cite{gt}), $N_{j}$'s are components of the
unit normal vector $N$. Note that $\widetilde{c}^{ij}$ is a positive
definite matrix. By the strong maximum principle, we know that the
oscillation function
${\rm{osc}}(t):=\max\widetilde{\mathcal{U}}(\cdot,t)-\min\widetilde{\mathcal{U}}(\cdot,t)\geq0$
is strictly decreasing in $t$ unless $\widetilde{\mathcal{U}}$ is
constant.

The long-time existence of solutions to the IVP $(\sharp)$ has been
explained in Subsection 2.3 provided the time-independent priori
gradient estimate can be obtained. Therefore, we have
$\mathbb{T}=\infty$ here.

We claim that $|\widetilde{\mathcal{U}}|$ must be uniformly bounded
on $\overline{\Omega}\times[0,\infty)$. By the maximum principle, we
know that the minimum of $\widetilde{\mathcal{U}}$ should be
achieved at some point
$(\xi,t_{0})\in\left(\partial\Omega\times[0,\infty)\right)\cup
\Omega_{0}$. If $(\xi,t_{0})\in\partial\Omega\times[0,\infty)$, then
$D_{T}\widetilde{\mathcal{U}}(\xi,t_{0})=0$ and
$D_{N}\widetilde{\mathcal{U}}(\xi,t_{0})\geq 0$. That is,
$D_{T}u_{1}(\xi,t_{0})=D_{T}u_{1}(\xi,t_{0})$ and
$D_{N}u_{1}(\xi,t_{0})\geq D_{N}u_{1}(\xi,t_{0})$. Therefore, one
has
\begin{eqnarray*}
\frac{D_{N}u_{1}}{\sqrt{1-|D_{T}u_{1}|^{2}-|D_{N}u_{1}|^{2}}}(\xi,t_{0})>\frac{D_{N}u_{2}}{\sqrt{1-|D_{T}u_{2}|^{2}-|D_{N}u_{2}|^{2}}}(\xi,t_{0}),
\end{eqnarray*}
since the function $\frac{q}{\sqrt{1-b^{2}-q^{2}}}$ with $b$ a fixed
constant is strictly increasing in $q$. However, this is contradict
with the boundary condition
\begin{eqnarray*}
\frac{D_{N}u_{1}}{\sqrt{1-|D_{T}u_{1}|^{2}-|D_{N}u_{1}|^{2}}}(\xi,t_{0})=\frac{D_{N}u_{2}}{\sqrt{1-|D_{T}u_{2}|^{2}-|D_{N}u_{2}|^{2}}}(\xi,t_{0})=\phi(\xi).
\end{eqnarray*}
Therefore, $(\xi,t_{0})\in\Omega_{0}$, i.e., $\xi\in\Omega$ and
$t_{0}=0$. This is to say that $\widetilde{\mathcal{U}}$ attains its
minimum on $\Omega_{0}$. The same situation happens to the maximum
of $\widetilde{\mathcal{U}}$. Hence, we have
$|\widetilde{\mathcal{U}}|=|u_{1}-u_{2}|\leq c_{7}(u_{0})$ for some
nonnegative constant $c_{7}(u_{0})$ only depending on $u_{0}$.

Since $|\widetilde{\mathcal{U}}|$ is uniformly bounded on
$\overline{\Omega}\times[0,\infty)$, we can take a sequence
$\{t_{n}\}$, $n\in\mathbb{Z}^{+}$ with $\mathbb{Z}^{+}$ the set of
all positive integers, such that the limit
$\lim_{t_{n}\rightarrow\infty}\widetilde{\mathcal{U}}(\cdot,t_{n})$
exists. If
$\lim_{t\rightarrow\infty}\widetilde{\mathcal{U}}(\cdot,t)$ were not
a constant function, then a limit of
$\widetilde{\mathcal{U}}_{n}(\cdot,t):=\widetilde{\mathcal{U}}(\cdot,t+t_{n})$
as $t_{n}\rightarrow\infty$ would yield a solution on
$\Omega\times[0,\infty)$ which would not be constant but on which
${\rm{osc}}(t)$ would be constant. But this is contradict with the
strict monotonicity of ${\rm{osc}}(t)$. Therefore,
$\lim_{t\rightarrow\infty}\widetilde{\mathcal{U}}(\cdot,t)$ should
be a constant function, which implies the first assertion.

By Remark \ref{remark2.1}, we know that $u+c_{3}t$ is a solution to
the IVP $(\sharp)$ provided $u$ is a solution to the BVP $(\ast)$.
Hence, for any solution $\omega$ of $(\sharp)$, by the first
assertion, one has $\omega-(u+c_{3}t)$ tends to a constant as
$t\rightarrow\infty$, which implies that $\omega$ tends to
$u+c_{3}t$ for a different $t$. This completes the proof of the
second assertion.
\end{proof}

By applying Lemma \ref{lemma3.1} directly, we have the following.

\begin{corollary}\label{coro}
For a solution $u=u(x,t)$ of the IVP $(\sharp)$, there exists some
positive constant $c_{8}\in\mathbb{R}^{+}$ such that
$|u(x,t)-c_{3}t|\leq c_{8}$.
\end{corollary}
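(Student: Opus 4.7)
The plan is to combine Remark \ref{remark2.1} (which produces an explicit translating solution) with the uniform bound on the difference of two solutions extracted during the proof of Lemma \ref{lemma3.1}. First I would invoke the existence half of the BVP solvability theorem to fix a solution $u_{\infty}(x)$ of $(\ast)$, and then, by Remark \ref{remark2.1}, set $\widetilde{u}(x,t):=u_{\infty}(x)+c_{3}t$, which is a particular solution of the IVP $(\sharp)$.

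Next I would apply Lemma \ref{lemma3.1} to the pair $\widetilde{u}_{1}:=u$ and $\widetilde{u}_{2}:=\widetilde{u}$. The key point is not just the asymptotic statement in Lemma \ref{lemma3.1}, but the quantitative intermediate bound established in its proof: by the boundary/interior maximum principle argument there, the extrema of $\widetilde{\mathcal{U}}=\widetilde{u}_{1}-\widetilde{u}_{2}$ on $\overline{\Omega}\times[0,\infty)$ can only be attained on the initial slice $\Omega_{0}$, so
\begin{equation*}
\bigl|u(x,t)-u_{\infty}(x)-c_{3}t\bigr|\;\leq\;\max_{\overline{\Omega}}\bigl|u_{0}(x)-u_{\infty}(x)\bigr|=:c_{7}
\end{equation*}
uniformly in $(x,t)\in\overline{\Omega}\times[0,\infty)$.

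Finally, since $u_{\infty}\in C^{\infty}(\overline{\Omega})$ and $\overline{\Omega}$ is compact, the quantity $c_{\infty}:=\max_{\overline{\Omega}}|u_{\infty}|$ is finite, so the triangle inequality yields
\begin{equation*}
|u(x,t)-c_{3}t|\;\leq\;|u(x,t)-u_{\infty}(x)-c_{3}t|+|u_{\infty}(x)|\;\leq\;c_{7}+c_{\infty}=:c_{8},
\end{equation*}
which is precisely the asserted estimate. Since every step is essentially a bookkeeping application of results already established (Remark \ref{remark2.1}, the BVP existence theorem, and the bound built inside Lemma \ref{lemma3.1}), there is no genuine analytic obstacle; the only thing one must be careful about is citing the correct piece of Lemma \ref{lemma3.1}'s proof, namely the uniform $L^{\infty}$ bound on $\widetilde{\mathcal{U}}$ rather than the weaker asymptotic constancy statement.
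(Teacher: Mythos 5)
Your proposal is correct and matches the paper's intended argument: the paper's ``By applying Lemma \ref{lemma3.1} directly'' is shorthand for exactly the chain you spell out, namely taking $\widetilde{u}=u_{\infty}+c_{3}t$ as the comparison solution and invoking the uniform bound $|\widetilde{\mathcal{U}}|\leq c_{7}(u_{0})$ established inside the proof of Lemma \ref{lemma3.1} (not merely its asymptotic statement), followed by the triangle inequality with $\max_{\overline\Omega}|u_\infty|$. You are also right to emphasize that it is the interior/boundary maximum-principle bound in the proof of Lemma \ref{lemma3.1}, rather than the convergence-to-a-constant conclusion, that carries the content here.
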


Now, we show that if $\int_{\partial\Omega}\phi=0$, the limiting
surface $u_{\infty}:=\lim_{t\rightarrow\infty}u(\cdot,t)$, with
$u(\cdot,t)$
 the solution to IVP $(\sharp)$, should be maximal space-like.

 \begin{lemma} \label{lemma3.3}
 If $\int_{\partial\Omega}\phi=0$, then $c_{3}=0$ and
 $\lim\limits_{t\rightarrow\infty}u_{t}=0$. That is, solutions
 $u(\cdot,t)$ to the IVP $(\sharp)$ converge to a maximal space-like surface $u_{\infty}$ in the Lorentz manifold $M^{2}\times\mathbb{R}$.
 \end{lemma}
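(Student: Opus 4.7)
The plan is to read off $c_3 = 0$ directly from the solvability formula for the elliptic problem $(\ast)$, and then to deduce $\lim_{t\to\infty} u_t = 0$ from Lemma \ref{lemma3.1} combined with higher-order parabolic regularity. For the first claim, formula (\ref{2.15}) expresses
\[
c_3 = -\frac{\int_{\partial\Omega}\phi}{\int_\Omega (1-|Du|^2)^{-1/2}},
\]
and by Theorem \ref{main2.3} the denominator is finite and strictly positive, so the hypothesis $\int_{\partial\Omega}\phi = 0$ forces $c_3 = 0$ at once.

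Next, apply the existence theorem for BVP $(\ast)$ to obtain a smooth solution $u_\infty \in C^\infty(\overline{\Omega})$ with right-hand side $c_3 = 0$. By Remark \ref{remark2.1}, viewed as time-independent, $u_\infty$ is then a \emph{stationary} solution of IVP $(\sharp)$. Feeding $\widetilde u_1(\cdot,t) = u(\cdot,t)$ and $\widetilde u_2(\cdot,t) \equiv u_\infty$ into Lemma \ref{lemma3.1} shows that $u(\cdot,t) - u_\infty$ tends to a constant as $t \to \infty$; after absorbing that constant into $u_\infty$ (which remains a solution of $(\ast)$ with $c_3 = 0$), this gives uniform convergence $u(\cdot,t) \to u_\infty$ on $\overline{\Omega}$.

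To conclude $u_t \to 0$, I would upgrade this $C^0$ convergence to $C^2$. The time-independent estimates from Lemma \ref{lemma2.2} and Theorem \ref{main2.3}, together with the oblique boundary condition $D_N u = \phi v$, feed into standard parabolic Schauder theory to produce uniform $C^{k,\alpha}$ bounds on $u$ over $\overline{\Omega}\times[0,\infty)$. Along any sequence $t_n \to \infty$, Arzel\`a--Ascoli then extracts a $C^2$-subsequential limit, which by the uniqueness in the previous paragraph must coincide with $u_\infty$; thus $u(\cdot,t) \to u_\infty$ in $C^2$. Passing to the limit in the identity $u_t = \bigl(\sigma^{ij} + D^i u D^j u/(1-|Du|^2)\bigr) D_i D_j u$ and using that the right-hand side vanishes at $u_\infty$ yields $\lim_{t\to\infty} u_t = 0$. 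Finally, since $u_\infty$ satisfies $g^{ij}(u_\infty) D_i D_j u_\infty = 0$, the scalar mean curvature formula (\ref{1.2}) gives $H_{u_\infty} = 0$, so $u_\infty$ is a maximal space-like surface in $M^{2}\times\mathbb{R}$.

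The main technical obstacle is the Schauder bootstrap used to promote the $C^0$ convergence from Lemma \ref{lemma3.1} to $C^2$ convergence up to the boundary; this requires boundary regularity for a parabolic problem with capillary-type (oblique, nonlinear) boundary data, and must be executed with care even though the a priori bounds $|u_t| \leq c_0$ and $|Du|^2 \leq c_1 < 1$ make the equation uniformly parabolic. Everything else is a direct consequence of results already established in Sections~2 and~3.
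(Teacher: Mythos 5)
Your proof is correct, but it takes a genuinely different route from the paper's. For the second assertion the paper does \emph{not} invoke Lemma~\ref{lemma3.1}; instead it uses a Lyapunov/energy argument, computing
\[
\frac{d}{dt}\left(\int_{\Omega}v-\int_{\partial\Omega}u\phi\right)=\int_{\Omega}\frac{u_{t}^{2}}{v}\geq 0 ,
\]
and then observing that (with $c_{3}=0$) Corollary~\ref{coro} bounds $\int_{\partial\Omega}u\phi$ and trivially $\int_\Omega v\leq|\Omega|$, so the left side is uniformly bounded; hence $\int_0^\infty\!\int_\Omega u_t^2/v\,dt<\infty$, which forces $u_t\to 0$ (implicitly using equicontinuity of $u_t$). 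Your approach instead feeds a stationary solution $u_\infty$ of the BVP $(\ast)$ (with $c_3=0$) into Lemma~\ref{lemma3.1}, obtains $C^0$ convergence $u(\cdot,t)\to u_\infty$, and promotes it to $C^2$ via a Schauder bootstrap before passing to the limit in the PDE. Both routes rest, at the final step, on the same (unstated in the paper) higher-order parabolic regularity, so you have not introduced any new gap beyond what the paper itself glosses over; your version has the advantage of reusing Lemma~\ref{lemma3.1} rather than introducing a separate monotone quantity, while the paper's energy identity is shorter and exhibits $\int_\Omega v-\int_{\partial\Omega}u\phi$ as a monotone functional along the flow, which is of independent interest.

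One small point of care worth flagging in your argument: when you apply Lemma~\ref{lemma3.1} with $\widetilde u_2\equiv u_\infty$, note that $u_\infty$ does not satisfy the same initial condition as $u$. This is fine because the proof of Lemma~\ref{lemma3.1} only uses that both functions satisfy the evolution equation and the oblique boundary condition, not that they share initial data — indeed the ``in particular'' clause of that lemma already uses it this way — but it is worth stating explicitly since the IVP $(\sharp)$ as written prescribes $u(\cdot,0)=u_0$.
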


\begin{proof}
The first assertion follows directly from (\ref{2.15}). By a direct
calculation, we have
\begin{eqnarray*}
\frac{d}{dt}\int_{\Omega}v=-\int_{\Omega}\frac{D_{i}u_{t}D_{i}u}{v}=\int_{\Omega}\frac{u_{t}^{2}}{v}+\int_{\partial\Omega}u_{t}\phi,
\end{eqnarray*}
which implies
\begin{eqnarray*}
\frac{d}{dt}\left(\int_{\Omega}v-\int_{\partial\Omega}u\phi\right)=\int_{\Omega}\frac{u_{t}^{2}}{v}.
\end{eqnarray*}
Applying Theorem \ref{main2.3} and Corollary \ref{coro}, we know
that there exists a positive constant $c_{9}\in\mathbb{R}^{+}$
depending on $c_{1}$, $c_{8}$ such that
\begin{eqnarray*}
\int_{\Omega}\frac{u_{t}^{2}}{v}\leq c_{9},
\end{eqnarray*}
which implies the second assertion of Lemma \ref{lemma3.3}.
\end{proof}

\vspace{0.2 cm}

$\\$\textbf{Acknowledgements}. This research was supported in part
by the National Natural Science Foundation of China (Grant Nos.
11401131 and 11101132), the Fok Ying-Tung Education Foundation
(China), and Hubei Key Laboratory of Applied Mathematics (Hubei
University). The last part of this paper was carried out when the
corresponding author, Prof. J. Mao, visited the Shanghai Center for
Mathematical Sciences (SCMS), Fudan University in April 2018, and he
is grateful to Prof. Peng Wu for the hospitality during his visit to
SCMS.

\vspace {1 cm}

\end{document}